\DeclareMathAlphabet{\pazocal}{OMS}{zplm}{m}{n}
\pgfplotsset{compat=1.17}
\definecolor{wrwrwr}{rgb}{0.3803921568627451,0.3803921568627451,0.3803921568627451}
\definecolor{rvwvcq}{rgb}{0.08235294117647059,0.396078431372549,0.7529411764705882}
\newcommand{\R}{\mathbb{R}}
\definecolor{verdeosc}{rgb}{0,0.6,0}
\newcommand{\ee}{\mathbf{e}}
\newcommand{\bv}{\mathbf{v}}
\tikzset{->/.style = {decoration={markings,
                                  mark=at position 1 with {\arrow[scale=2]{latex'}}},
                      postaction={decorate}}}
\tikzset{<-/.style = {decoration={markings,
                                  mark=at position 0 with {\arrowreversed[scale=2]{latex'}}},
                      postaction={decorate}}}
\tikzset{<->/.style = {decoration={markings,
                                   mark=at position 0 with {\arrowreversed[scale=2]{latex'}},
                                   mark=at position 1 with {\arrow[scale=2]{latex'}}},
                       postaction={decorate}}}
\tikzset{->-/.style = {decoration={markings,
                                   mark=at position #1 with {\arrow[scale=2]{latex'}}},
                       postaction={decorate}}}
\tikzset{-<-/.style = {decoration={markings,
                                   mark=at position #1 with {\arrowreversed[scale=2]{latex'}}},
                       postaction={decorate}}}
\tikzset{->>/.style = {decoration={markings,
                                  mark=at position 1 with {\arrow[scale=2]{latex'}}},
                      postaction={decorate}}}
\tikzset{<<-/.style = {decoration={markings,
                                  mark=at position 0 with {\arrowreversed[scale=2]{twolatex'}}},
                      postaction={decorate}}}
\tikzset{<<->>/.style = {decoration={markings,
                                   mark=at position 0 with {\arrowreversed[scale=2]{twolatex'}},
                                   mark=at position 1 with {\arrow[scale=2]{twolatex'}}},
                       postaction={decorate}}}
\tikzset{->>-/.style = {decoration={markings,
                                   mark=at position #1 with {\arrow[scale=2]{twolatex'}}},
                       postaction={decorate}}}
\tikzset{-<<-/.style = {decoration={markings,
                                   mark=at position #1 with {\arrowreversed[scale=2]{twolatex'}}},
                       postaction={decorate}}}
\tikzset{circ/.style = {fill, circle, inner sep = 0, minimum size = 3}}
\tikzset{scirc/.style = {fill, circle, inner sep = 0, minimum size = 1.5}}
\tikzset{mstate/.style={circle, draw, blue, text=black, minimum width=0.7cm}}
\tikzset{eqpic/.style={baseline={([yshift=-.5ex]current bounding box.center)}}}
\tikzset{commutative diagrams/.cd,cdmap/.style={/tikz/column 1/.append style={anchor=base east},/tikz/column 2/.append style={anchor=base west},row sep=tiny}}
\definecolor{mblue}{rgb}{0.2, 0.3, 0.8}
\definecolor{morange}{rgb}{1, 0.5, 0}
\definecolor{mgreen}{rgb}{0.1, 0.4, 0.2}
\definecolor{mred}{rgb}{0.5, 0, 0}
\def\drawcirculararc(#1,#2)(#3,#4)(#5,#6){%
    \pgfmathsetmacro\cA{(#1*#1+#2*#2-#3*#3-#4*#4)/2}%
    \pgfmathsetmacro\cB{(#1*#1+#2*#2-#5*#5-#6*#6)/2}%
    \pgfmathsetmacro\cy{(\cB*(#1-#3)-\cA*(#1-#5))/%
                        ((#2-#6)*(#1-#3)-(#2-#4)*(#1-#5))}%
    \pgfmathsetmacro\cx{(\cA-\cy*(#2-#4))/(#1-#3)}%
    \pgfmathsetmacro\cr{sqrt((#1-\cx)*(#1-\cx)+(#2-\cy)*(#2-\cy))}%
    \pgfmathsetmacro\cA{atan2(#2-\cy,#1-\cx)}%
    \pgfmathsetmacro\cB{atan2(#6-\cy,#5-\cx)}%
    \pgfmathparse{\cB<\cA}%
    \ifnum\pgfmathresult=1
        \pgfmathsetmacro\cB{\cB+360}%
    \fi
    \draw (#1,#2) arc (\cA:\cB:\cr);%
}
\tikzset{->/.style = {decoration={markings,
                                  mark=at position 1 with {\arrow[scale=2]{latex'}}},
                      postaction={decorate}}}
\tikzset{<-/.style = {decoration={markings,
                                  mark=at position 0 with {\arrowreversed[scale=2]{latex'}}},
                      postaction={decorate}}}
\tikzset{<->/.style = {decoration={markings,
                                   mark=at position 0 with {\arrowreversed[scale=2]{latex'}},
                                   mark=at position 1 with {\arrow[scale=2]{latex'}}},
                       postaction={decorate}}}
\tikzset{->-/.style = {decoration={markings,
                                   mark=at position #1 with {\arrow[scale=2]{latex'}}},
                       postaction={decorate}}}
\tikzset{-<-/.style = {decoration={markings,
                                   mark=at position #1 with {\arrowreversed[scale=2]{latex'}}},
                       postaction={decorate}}}
\tikzset{->>/.style = {decoration={markings,
                                  mark=at position 1 with {\arrow[scale=2]{latex'}}},
                      postaction={decorate}}}
\tikzset{<<-/.style = {decoration={markings,
                                  mark=at position 0 with {\arrowreversed[scale=2]{twolatex'}}},
                      postaction={decorate}}}
\tikzset{<<->>/.style = {decoration={markings,
                                   mark=at position 0 with {\arrowreversed[scale=2]{twolatex'}},
                                   mark=at position 1 with {\arrow[scale=2]{twolatex'}}},
                       postaction={decorate}}}
\tikzset{->>-/.style = {decoration={markings,
                                   mark=at position #1 with {\arrow[scale=2]{twolatex'}}},
                       postaction={decorate}}}
\tikzset{-<<-/.style = {decoration={markings,
                                   mark=at position #1 with {\arrowreversed[scale=2]{twolatex'}}},
                       postaction={decorate}}}
\tikzset{circ/.style = {fill, circle, inner sep = 0, minimum size = 3}}
\tikzset{scirc/.style = {fill, circle, inner sep = 0, minimum size = 1.5}}
\tikzset{mstate/.style={circle, draw, blue, text=black, minimum width=0.7cm}}
\tikzset{eqpic/.style={baseline={([yshift=-.5ex]current bounding box.center)}}}
\tikzset{commutative diagrams/.cd,cdmap/.style={/tikz/column 1/.append style={anchor=base east},/tikz/column 2/.append style={anchor=base west},row sep=tiny}}
\definecolor{mblue}{rgb}{0.2, 0.3, 0.8}
\definecolor{morange}{rgb}{1, 0.5, 0}
\definecolor{mgreen}{rgb}{0.1, 0.4, 0.2}
\definecolor{mred}{rgb}{0.5, 0, 0}
\def\drawcirculararc(#1,#2)(#3,#4)(#5,#6){%
    \pgfmathsetmacro\cA{(#1*#1+#2*#2-#3*#3-#4*#4)/2}%
    \pgfmathsetmacro\cB{(#1*#1+#2*#2-#5*#5-#6*#6)/2}%
    \pgfmathsetmacro\cy{(\cB*(#1-#3)-\cA*(#1-#5))/%
                        ((#2-#6)*(#1-#3)-(#2-#4)*(#1-#5))}%
    \pgfmathsetmacro\cx{(\cA-\cy*(#2-#4))/(#1-#3)}%
    \pgfmathsetmacro\cr{sqrt((#1-\cx)*(#1-\cx)+(#2-\cy)*(#2-\cy))}%
    \pgfmathsetmacro\cA{atan2(#2-\cy,#1-\cx)}%
    \pgfmathsetmacro\cB{atan2(#6-\cy,#5-\cx)}%
    \pgfmathparse{\cB<\cA}%
    \ifnum\pgfmathresult=1
        \pgfmathsetmacro\cB{\cB+360}%
    \fi
    \draw (#1,#2) arc (\cA:\cB:\cr);%
}
\def\XXint#1#2#3{{\setbox0=\hbox{$#1{#2#3}{\int}$}
     \vcenter{\hbox{$#2#3$}}\kern-.5\wd0}}
\mathchardef\mdash="2D
\DeclareRobustCommand{\rvdots}{%
  \vbox{
    \baselineskip4\p@\lineskiplimit\z@
    \kern-\p@
    \hbox{.}\hbox{.}\hbox{.}
  }}
\newcommand\reallywidehat[1]{%
\savestack{\tmpbox}{\stretchto{%
  \scaleto{%
    \scalerel*[\widthof{\ensuremath{#1}}]{\kern-.6pt\bigwedge\kern-.6pt}%
    {\rule[-\textheight/2]{1ex}{\textheight}}%WIDTH-LIMITED BIG WEDGE
  }{\textheight}% 
}{0.5ex}}%
\stackon[1pt]{#1}{\tmpbox}%
}
\newtheorem{theorem}{Theorem}
\newtheorem*{theorem*}{Theorem}
\newtheorem{lemma}[theorem]{Lemma}
\newtheorem{proposition}[theorem]{Proposition}
\newtheorem*{corollary*}{Corollary}
\newtheorem{conjecture}[theorem]{Conjecture}
\theoremstyle{definition}
\newtheorem{definition}[theorem]{Definition}
\newtheorem{claim}[theorem]{Claim}
\theoremstyle{remark}
\newtheorem{remark}[theorem]{Remark}
\newtheorem*{remark*}{Remark}
\definecolor{ForestGreen}{RGB}{34,139,34}
\theoremstyle{definition}
\newtheorem*{definition*}{Definition}
\newtheorem*{claim*}{Claim}
\newcommand{\ve}{\varepsilon}
\newcommand{\mr}[1]{{\rm #1}}
\newcommand{\la}{\langle}
\newcommand{\rg}{\rangle}
\newcommand{\cA}{\mathcal{A}}\newcommand{\cB}{\mathcal{B}}
\newcommand{\cC}{\mathcal{C}}\newcommand{\cD}{\mathcal{D}}
\newcommand{\cP}{\mathcal{P}}
\newcommand{\bR}{\mathbb{R}}
\newcommand{\bS}{\mathbb{S}}
\newcommand{\bZ}{\mathbb{Z}}
\newcommand{\pH}{\pazocal{H}}
\newcommand{\pP}{\pazocal{P}}
\newcommand{\pR}{\pazocal{R}}
\newcommand{\nc}{\newcommand}
\nc{\on}{\operatorname}
\nc{\Spec}{\on{Spec}}
\nc{\sn}{\mr{sn}}
\nc{\cn}{\mr{cn}}
\nc{\dn}{\mr{dn}}
\newcommand{\sfint}{\text{\sffamily{Int}}}
\newcommand{\sfgr}{\text{\sffamily{graph}}}
\numberwithin{equation}{section}
\title{Uniqueness of Semigraphical Translators} 
\author[F. Mart\'{\i}n]{\textsc{F. Mart\'{\i}n}}
\address{Francisco Mart\'in\newline
Departamento de Geometr\'{\i}a y Topolog\'{\i}a  \newline
Instituto de Matem\'aticas  de Granada (IMAG) \newline
Universidad de Granada\newline
18071 Granada, Spain\newline
{\sl E-mail address:} {\bf fmartin@ugr.es}
}% 
\thanks{F. Mart\'{\i}n  was partially supported by the MICINN grant PID2020-116126-I00 and by the IMAG--Maria de Maeztu grant CEX2020-001105-M / AEI / 10.13039/501100011033.}
\author[M. S\'aez]{\textsc{M. S\'aez}}
\address{Mariel S\'aez\newline
P. Universidad Cat\'olica de Chile \newline
Facultad de Matem\'aticas\newline
Avda. Vicu\~na Mackenna 4860 \newline
Macul, Santiago, 6904441, Chile \newline
{\sl E-mail address:} {\bf mariel@mat.puc.cl}
}
\thanks{ M. S\'aez is partially supported by the grant Fondecyt Regular 1190388. }
\author[R. Tsiamis]{\textsc{R. Tsiamis}}
\address{Raphael Tsiamis\newline
Department of Mathematics \newline
Columbia University \newline 
2990 Broadway, New York NY 10027, USA \newline
{\sl E-mail address:} {\bf r.tsiamis@columbia.edu}
}
\thanks{ R. Tsiamis was partially supported by the A.G. Leventis Foundation Scholarship. 
He acknowledges the hospitality of the University of Granada, where part of this work was carried out.}
\date{\today}
\subjclass[2010]{Primary 53E10, 53C21, 53C42}
\keywords{Mean curvature flow, singularities, translators.}
\begin{document}

\maketitle

\vspace*{0.15in}

%{\color{black}\tableofcontents}

\begin{abstract}

We prove a conjecture by Hoffman, White, and the first author regarding the uniqueness of pitchfork and helicoid translators of the mean curvature flow in $\bR^3$. 
We employ an arc-counting argument motivated by Morse-Rad\'o theory for translators and a rotational maximum principle. 
Applications to the classification of semigraphical translators in $\bR^3$ and their limits are discussed, strengthening compactness results of the first author with Hoffman-White and with Gama-M\o ller.

\end{abstract}

\section{Introduction}
The mean curvature flow (MCF) is a geometric flow that evolves  a surface in the direction of  its mean curvature vector at each point. Translating solitons (or translators, for short) in the context of mean curvature flow are special types of solutions to the flow equation that preserve their shape along the flow.
Translators are interesting because they provide insights into the  mean curvature flow, specifically its long-time behavior, the formation of singularities, and the geometry of the evolving surfaces.
Some important results related to translators  include their classification in certain settings, the study of their stability properties, and their relationship to other geometric flows and geometric inequalities.

A  surface \(M\) is a translator for the mean curvature flow if there exists a fixed (unit) vector \(\bv\) such that
\(
M+ t \, \bv
\)
is a mean curvature flow, whereby $\mathbf{v}$ is called the velocity of the flow.
This means that 
\[ \vec H= \bv ^\perp .\]
Up to a rigid motion of $\R^3$, we can assume that $\bv=- \ee_3$, so the scalar mean curvature satisfies the equation $H = \la \nu, - \mathbf{e}_3 \rg$, where $\nu$ is the normal vector to the surface.

If $M=\sfgr(u)$, the equation $H + \la \nu, \textbf{e}_3 \rg = 0$ becomes the quasilinear elliptic PDE:
\begin{equation}\label{eqn:translatorequationR3}
    F(u,Du, D^2 u ) = \left( \Delta u + u_{xx} u_y^2 - 2 u_{xy} u_x u_y + u_{yy} u_x^2 \right) + \left(1 + |Du|^2 \right) = 0.
\end{equation}
Translators are minimal surfaces for Ilmanen's conformally flat metric $g_{\text{\sffamily{trans}}}$ given in \cite{ilmanen_1994} by $e^{-z} \delta_{ij}$; equivalently, they solve the Euler-Lagrange equation for the weighted volume functional $\mr{Vol}_{-z}(\Sigma) = \int_{\Sigma} e^{-z} \, d \mr{Vol}_{\Sigma}$. See \cite{himw-survey} for a survey about translating solitons.

In \cite{HMW1}, the existence and uniqueness of doubly periodic (``Scherk'') and simply periodic (``Scherkenoid'') translator surfaces is established.
These arise as the complete, simply connected, properly embedded translators resulting from repeated Schwarz reflection, around a vertical axes of symmetry, of the graph of a function solving equation \eqref{eqn:translatorequationR3} on:
\begin{enumerate}[(i)]
    \item for Scherk translators, a parallelogram $\cP(\alpha, w,L)$ of base angle $\alpha \in (0,\pi)$, width $w$, and unique length $L(\alpha,w)$, with boundary values $\pm \infty$ on the pairs of opposite sides; 
    \item for Scherkenoid translators, the half-strip of boundary data that results from extending the horizontal sides of $\cP(\alpha,w,L)$ above into rays in the $(+1,0)-$direction.
\end{enumerate}
A sequential compactness theorem \cite[Theorem 10.1]{HMW1} shows that one can extract (subsequential) limits from sequences of these Scherk or Scherkenoid surfaces as the base angle, $\alpha,$ tends to $\pi$. The behavior of these subsequential limits changes depending on whether $w < \pi$ or $w \geq \pi$.
These are accordingly named \textit{helicoids} or \textit{pitchforks}, coming from Scherk or Scherkenoid translators, respectively.
They are \textit{semigraphical translators} in $\bR^3$, i.e., properly embedded surfaces that are graphs after removing a discrete collection of vertical lines; these are classified by Hoffman, White, and the first author in \cite{HMW2}.
See \hyperref[pitchfork]{Figure 1} and \hyperref[helicoid]{Figure 2} for example illustrations of these translators and their fundamental pieces.
One is naturally led to the study of semigraphical translators as the next step following the classification of graphical solitons in $\bR^3$, due to Spruck-Xiao \cite{spruck-xiao} for the rigidity of the \textit{bowl soliton} and Hoffman-Ilmanen-White and the first author in the general case \cites{graphs, graphs-correction}.

The limit definition of helicoids and pitchforks implies various geometric properties, such as their asymptotic behavior and Gaussian curvature, cf. \cite[\S 11,12]{HMW1}.
In that paper, Hoffman, White, and the first author conjecture the following:
\begin{conjecture}[\cite{HMW1}]
For a given width $w$, the corresponding semigraphical translator $\Sigma_w$ (helicoid for $w < \pi$, or pitchfork for $w \geqslant \pi$) is unique up to vertical translation.
\end{conjecture}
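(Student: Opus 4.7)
The plan is to assume that $\Sigma_1$ and $\Sigma_2$ are two semigraphical translators of the same width $w$ and reduce the conjecture to a comparison of two solutions of the translator equation on a common fundamental domain. After a horizontal translation in $\bR^2$, I align the vertical axes of $180$-degree symmetry of $\Sigma_1$ with those of $\Sigma_2$. Each surface is then built by iterated Schwarz reflection from a fundamental piece that is a graph over a specific planar domain $\Omega$ (a strip in the helicoid case $w<\pi$, a half-strip in the pitchfork case $w\geq\pi$), with prescribed $\pm\infty$ boundary values on the sides corresponding to the vertical axes, as dictated by the limit construction in \cite{HMW1}. So the conjecture reduces to the uniqueness, up to an additive constant, of solutions $u:\Omega\to\bR$ of \eqref{eqn:translatorequationR3} with those asymptotic boundary data.

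Next I would consider the one-parameter family of differences $\phi_c := u_1 - u_2 - c$ for $c\in\bR$, where $u_1,u_2$ are two candidate solutions. Each $\phi_c$ satisfies a linear uniformly elliptic equation on $\Omega$ obtained by differentiating $F$ in \eqref{eqn:translatorequationR3} along a path from $(u_1,Du_1,D^2u_1)$ to $(u_2,Du_2,D^2u_2)$. By unique continuation for elliptic PDEs, the nodal set $N_c=\{\phi_c=0\}$ is a locally finite union of analytic arcs, with isolated interior crossings at critical points of $\phi_c$. In the spirit of Morse-Rad\'o theory for translators, the idea is to count the arcs of $N_c$ exiting each boundary component of $\Omega$ and to follow how this count changes as $c$ varies in $\bR$. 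For $|c|$ sufficiently large, $N_c$ becomes empty (thanks to the boundedness of $u_1-u_2$ along the infinite ends, inherited from the asymptotic description of helicoids and pitchforks in \cite{HMW1,HMW2}); so the arc count must vanish at the two extremes, forcing $\phi_{c_0}\equiv 0$ for some intermediate $c_0$ — i.e., $\Sigma_1=\Sigma_2+c_0\ee_3$.

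The rotational maximum principle enters as the main tool supplying the one-sided control that seeds the arc-count near the blow-up sides of $\Omega$. Although $u_1$ and $u_2$ individually diverge as one approaches a shared vertical axis $L$, the surfaces $\Sigma_1$ and $\Sigma_2$ themselves extend smoothly across $L$ via $180$-degree rotation. By rotating a collar of $\Sigma_2$ about $L$ and sliding vertically, one can directly compare it with a collar of $\Sigma_1$ on the other side of $L$, obtaining a one-sided inequality between $u_1$ and a rotated translate of $u_2$ on a neighborhood of $L$. This pins down how many arcs of $N_c$ may emanate from each blow-up side of $\partial\Omega$ and converts tangential contact at $L$ into genuine coincidence of the two surfaces.

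The main obstacle I anticipate is the global accounting of nodal arcs on the non-compact domain $\Omega$, particularly in the pitchfork case where $\Omega$ is only a half-strip. An interior arc of $N_c$ must either close up in $\Omega$ — bounding a region on which $\phi_c$ has constant sign, contradicting the linear elliptic maximum principle — or escape through one of the ends. Ruling out configurations in which arcs travel between two distinct ends without producing a sign contradiction, and showing that the total arc count is monotone (or at least cannot jump past zero) as $c$ varies, requires carefully combining the specific $\pm\infty$ boundary pattern, the rotational symmetries, and the asymptotic wing geometry from \cite{HMW1,HMW2}. This is where the Morse-Rad\'o-type bookkeeping and the rotational maximum principle must be played off against each other with precision.
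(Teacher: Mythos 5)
There is a genuine gap, and it begins at the very first step. You align the axes of symmetry of $\Sigma_1$ and $\Sigma_2$ by a horizontal translation, so that the two fundamental pieces are graphs over the \emph{same} strip with the \emph{same} boundary behavior. But the paper's entire argument is built on deliberately \emph{not} doing this: starting from the Gauss-map diffeomorphism (Theorems 11.1, 12.1 of \cite{HMW1}, whose images $\pR_w$ depend only on $w$), one produces a point $q_0$ with $Du_1(q_0)=Du_2(p_0)$ and translates by $\vec\xi=p_0-q_0$, and Lemma \ref{lemma:xi_2neq0} guarantees one can arrange $\xi_2\neq0$. As explained in Remark \ref{rmk:xineq0}, this offset is what makes the whole machine run: the common domain $\Omega_w\cap\Omega'_w$ then has two boundary lines, on each of which exactly one of $u'_1,u_2$ is finite and the other diverges, so arcs of the nodal set can escape to the boundary only at the finitely many ``vertices'' ($\vec\xi$, and $(\hat x,w)$ in the helicoid case), and otherwise must go to infinity along the strip. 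By aligning the axes you set $\vec\xi=0$: then both functions blow up at the same rate along the same sides, and you lose all control over arcs escaping through those sides.

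The second gap is in the intermediate-value step. You assert that $u_1-u_2$ is bounded along the ends (not established), hence $N_c=\varnothing$ for $|c|$ large, and then that ``the arc count must vanish at the two extremes, forcing $\phi_{c_0}\equiv0$ for some intermediate $c_0$.'' This does not follow. Having $N_c=\varnothing$ for $|c|$ large is consistent with $N_c$ being a nonempty collection of regular arcs for every $c$ in the range of $u_1-u_2$ and $\phi_c$ never being identically zero. No monotonicity or invariance of the arc count is identified that would force a level set to be everything. The paper's counting contradiction is entirely different and purely local at an isolated critical point $p_0$: by Morse-Rad\'o theory (\cite[Proposition 4.2]{HMW1}) a critical point forces at least four arcs of $\{u'_1-u_2=0\}$ through $p_0$, while the classification in Lemma \ref{lemma:TypeOfArcs} together with Propositions \ref{prop:UniqueViaGrimReaper} and \ref{prop:NumberOfArcsThroughp0} shows there are at most three (pitchfork) or, in the helicoid case, that two arcs of the same type must appear. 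Your proposal never produces the critical point on which this count hinges; the Gauss-map construction is the device that does so.

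Two further points. First, your ``rotational maximum principle'' (rotating by $180^{\circ}$ across a shared axis) is not what the paper uses; the paper's rotation is by a small angle about a distant vertical axis through a point $p_a$ in the plane, and the key technical input is the $\vartheta$-graph property (Lemmas \ref{Lemma:GraphicalImmersion}--\ref{lemma:PitchforkRotationalGraph}), which lets one slide $\tilde M'_{1,\theta}$ past $\tilde M_2$ until an interior tangency occurs, whereupon the tangency principle applies. Second, the helicoid case is genuinely harder: since type (iv) arcs exist there, the mere classification gives at most four arcs, which is \emph{not} a contradiction with the lower bound of four. The paper resolves this by comparing $\pH_1$ with the particular helicoid obtained as a limit of Scherk translators, using the lower semicontinuity of the Morse-Rad\'o critical-point count (\cite[Theorem 39, Corollary 40]{morserado}) to show that two arcs must leave $p_0$ in the same direction. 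Your proposal does not address this case and would not distinguish pitchforks from helicoids.
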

We prove this conjecture and discuss its applications to the existence and uniqueness of limits of translators and to the classification of semigraphical translators. 

The main arguments are based on the so-called Morse-Rad\'o theory for minimal surfaces \cite{morserado} and what we call {\em rotational maximum principle}, which uses some ideas developed by F. Chini \cite{chini}.
See also \cite{HMW3} for related results.
\begin{figure}[ht]\label{pitchfork}
{\includegraphics[width=11cm]{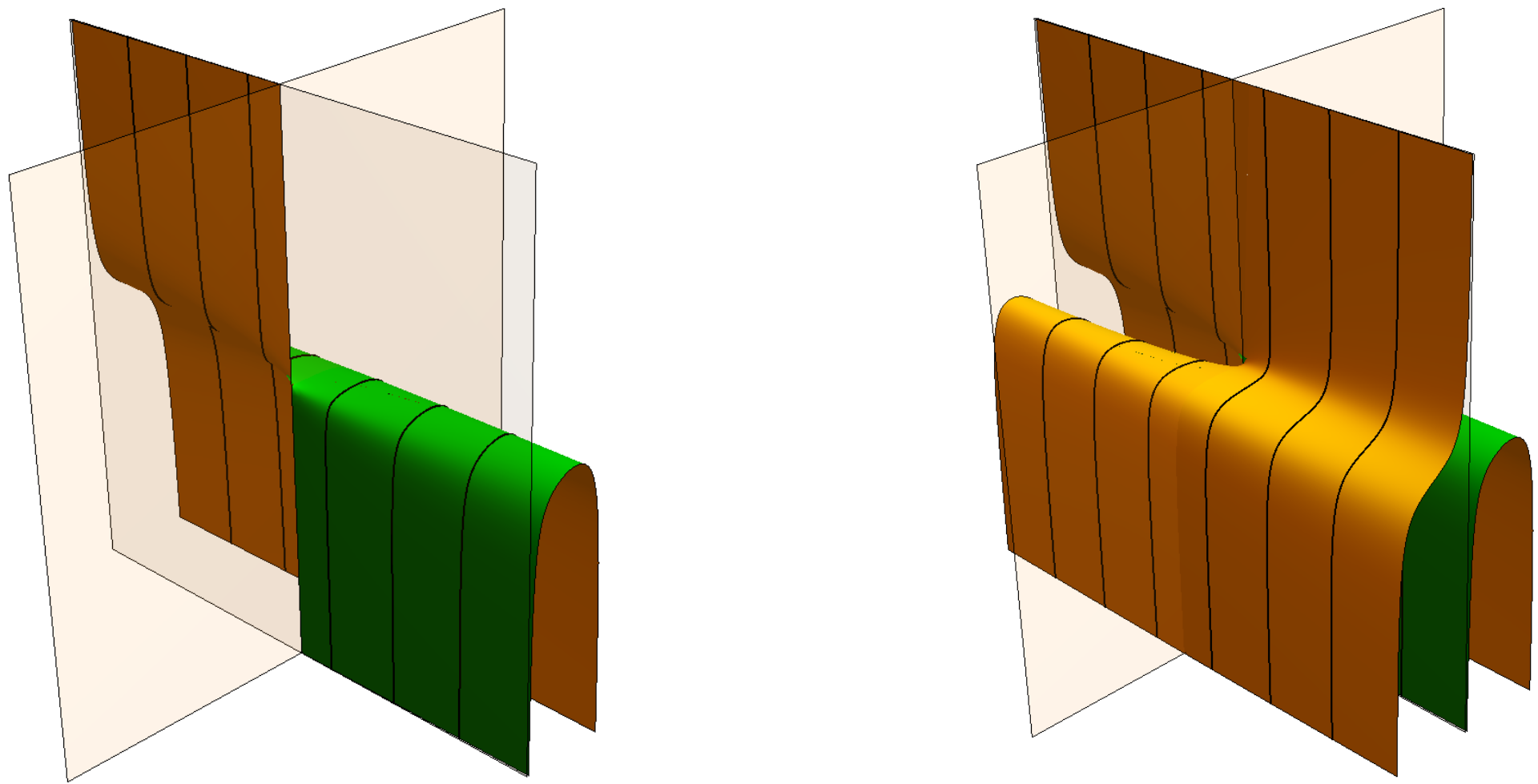}}
\caption{\small \sffamily From left to right: A fundamental piece of the pitchfork of width $\pi$ and the whole surface, obtained from a fundamental piece by a $180^{\rm o}$ rotation around the $z$-axis.
The asymptotic behavior at the two wings (vertical plane/ grim reaper) is visible here.}
\end{figure}

\subsection{Definitions and Preliminaries}

We recall the definitions of our translators from \cite[\S 11,12]{HMW1}.
The domain of definition will be the strip:
\begin{equation}\label{eqn:Strip}
    \Omega_w := \bR \times (0,w) = \left\{ (x,y) \in \bR^2 : 0 < y < w \right\}.
\end{equation}
We also define the cut-off half strips 
\begin{equation}\label{eqn:GeneralHalfStrips}
    \Omega^{> x_0}_w := \left\{ (x,y) \in \sfint(\Omega_w) : x > x_0 \right\}, \qquad 
    \Omega^{< x_0}_w := \left\{ (x,y) \in \sfint(\Omega_w) : x < x_0 \right\}.
\end{equation}

\begin{definition}[Pitchfork]\label{def:Pitchfork}
For any $w \geq \pi$, there exists a smooth translator $M$ whose boundary $\partial M$ is the $z-$axis $Z$ and whose interior $M \setminus \partial M$ is the graph of a function $u: \Omega_w \to \bR$ satisfying \eqref{eqn:translatorequationR3} on $\sfint(\Omega_w)$ with boundary values:
\begin{equation}\label{eqn:pitchforkboundaryvalues}
u(x,0) = \begin{cases}
    + \infty, & x < 0, \\ - \infty, & x > 0
\end{cases}, \quad \quad \text{and} \quad \quad u(x,w) = - \infty.  
\end{equation}
A \textit{pitchfork of width $w$} is the complete, simply connected translator without boundary $\pP_w$ obtained by performing a single Schwarz reflection of $M$ about the $z-$axis, $Z$. 
It follows that $\pP_w \setminus Z$ projects diffeomorphically onto $\{ - w < y < 0\} \cup \{0 < y < w\}$.
\end{definition}
\begin{definition}[Helicoid]\label{def:Helicoid}
For any $w < \pi$, there exists a smooth translator $M$ whose boundary $\partial M$ consists of two vertical lines, the $z-$axis and the line $\{ x = \hat{x}, y = w\}$ for some $\hat{x}>0$, and whose interior $M \setminus \partial M$ is the graph of a function $u: \Omega_w \to \bR$ satisfying \eqref{eqn:translatorequationR3} on $\sfint(\Omega_w)$ with boundary values:
\begin{equation}\label{eqn:helicoidboundaryvalues}
u(x,0) = \begin{cases}
    + \infty, & x < 0, \\ - \infty, & x > 0
\end{cases}, \quad \quad \text{and} \quad \quad u(x,w)  = \begin{cases}
    - \infty, & x < \hat{x}, \\ + \infty, & x > \hat{x}
\end{cases}.
\end{equation}
A \textit{helicoid of width $w$} is the complete, simply connected translator without boundary $\pH_w$ obtained from $M$ by performing countably many repeated Schwarz reflection about these axes.
It follows that $\pH_w$ contains the vertical lines $L_n$ through the points $n(\hat{x},w)$ for $n \in \bZ$ and $\pH_w \setminus \bigcup_n L_n$ projects diffeomorphically onto the strip cover $\cup_{n \in \bZ} \{ nw < y < (n+1) w\}$.
\end{definition}
In \cite[Theorem 11.1, Theorem 12.1]{HMW1}, the Gauss maps $\nu_{\Sigma}$ of the translators $\Sigma$ (helicoid $\pH_w$ or pitchfork $\pP_w$, respectively) are established to give a diffeomorphism from $\Sigma$ (in particular, from $M$) onto a region $\pR_w$ of the upper hemisphere in $\bS^2$.
The region $\pR_w$ depends only on $w$, and not on $\Sigma$; in fact, for a helicoid $\pH_w$, where $w < \pi$, the region $\pR_w$ is the entire upper hemisphere.

\begin{remark*}
Throughout the paper, we will use the notation $\Sigma$ or $\Sigma_w$ for a general complete semigraphical translator as above, which may be a helicoid or a pitchfork depending on whether $w< \pi$ or $w \geqslant \pi$.
When we want to make a distinction, we will adopt the notation introduced above: $\pH_w$ or $\pP_w$, respectively.
Our arguments will be primarily focused on the fundamental pieces of these translators, which will always be denoted by $M = \sfgr(u)$.
Following \eqref{eqn:GeneralHalfStrips}, we define the half-infinite parts of $M$,
\begin{equation}\label{eqn:GeneralHalfStripsgraph}
    M^{> x_0} := M \cap \{x>x_0\}, \qquad 
    M^{< x_0} := M \cap \{x<x_0\}.
\end{equation}
\end{remark*}

\begin{figure}[ht]\label{helicoid}
{\includegraphics[width=10cm]{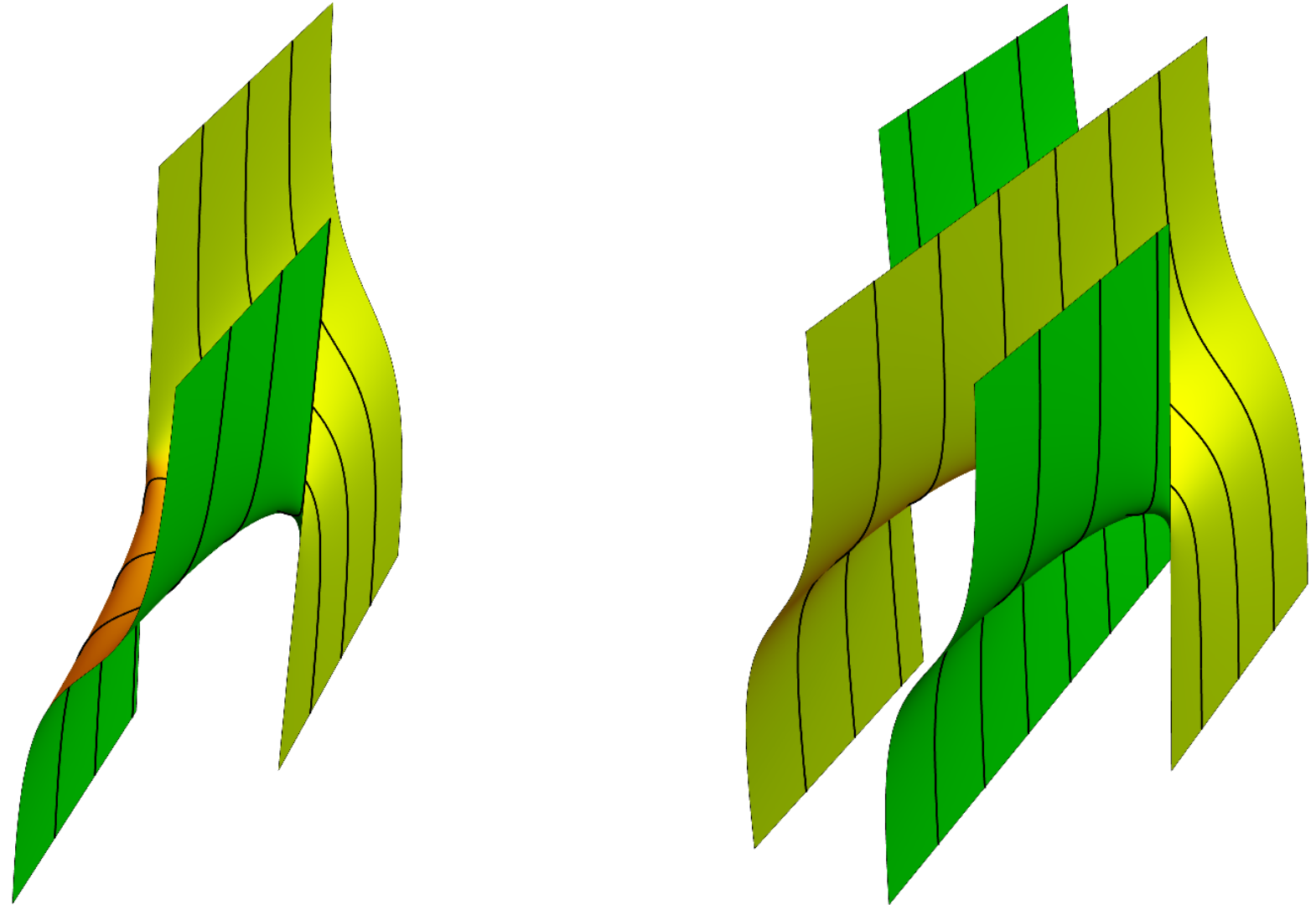}}
\caption{\small \sffamily From left to right: A fundamental piece of the helicoid of width $\pi/2$; and part of the surface, obtained by repeated reflection along vertical boundary lines.}
\end{figure}

\subsection{Acknowledgments}

The authors are thankful to Brian White, David Hoffman, and Eddygledson Souza Gama for many helpful discussions and comments.

\section{Uniqueness of Semigraphical Translators}

\subsection{Setup and proof technique} \label{subsec:setup}

We obtain the uniqueness (up to vertical shifts) of the semigraphical translators under consideration by an arc-counting argument that applies \cite[Proposition 4.2]{HMW1}.
Let $\Sigma_{1}$ and $\Sigma_2$ be two such translators of given width $w$, both pitchforks or both helicoids.
We study the functions $u_1$ and $u_2$ on the fundamental domain $\Omega_w$, which produce translators $M_1$ and $M_2$, which we call \textit{fundamental pieces}, that give rise to $\Sigma_{1}$  and $\Sigma_2$ upon the Schwarz reflections described above; these determine the resulting $\Sigma_i$ uniquely, for $i=1,2$.

Assuming by contradiction that the difference $u_1 - u_2$ is not a constant function then we have some point $p_0 \in \Omega_w$ such that $Du_1(p_0) \neq Du_2(p_0)$.
Since the Gauss map of each graph $u_i$, given by $\dfrac{(-Du, 1)}{\sqrt{1 + |Du|^2}}$, is a diffeomorphism of $M_i$ onto a region of the open upper hemisphere, we can produce point a $q_0 \neq p_0 \in \Omega_w$ such that $Du_1(q_0) = Du_2(p_0)$.
We then translate the domain by $\vec{\xi} = p_0 - q_0 = (\xi_1, \xi_2) \neq 0$ to obtain $\Omega'_w = \Omega_w + \vec{\xi}$, for which these points are now aligned.
Notice that for any vector $\vec{\xi} = (\xi_1, \xi_2)$ lying on the $(x,y)-$plane, the translation $T_{\vec{\xi}}(p) = p - \vec{\xi}$ on $\bR^3$ is an isometry of the Euclidean metric $\delta_{ij}$ and preserves the conformal factor $e^{-z}$, hence an isometry of $g_{\text{\sffamily{trans}}}$ preserving equation \eqref{eqn:translatorequationR3} and its solutions.

Similarly, we define $\Sigma'_1$ as the translator obtained by Schwarz reflection (as in Definition \ref{def:Pitchfork} or Definition \ref{def:Helicoid}, respectively) of the graph $M'_1 \setminus \partial M'_1 = \sfgr(u'_1: \Omega'_w \to \bR)$ of the function $u'_1(p) = u_1 (p - \vec{\xi} \, )$.
In the pitchfork case, $M'_1$ has boundary the $\vec{\xi}-$translated $z-$axis, $\{ x = \xi_1, y = \xi_2 \} \subset \bR^3$; in the helicoid case, the boundary $\partial M'_1$ consists of two vertical lines, $\vec{\xi}-$translated from Definition \ref{def:Helicoid}: $\{ x = \xi_1, y = \xi_2 \}$ and $\{ x = \hat{x} + \xi_1, y = w + \xi_2 \}$.
By construction,
\begin{equation}\label{eqn:ConstructionCriticalPoint}
D(u'_1 - u_2)|_{p_0} = Du_1(q_0) - Du_2(p_0) = 0
\end{equation}
so $p_0 \in \Omega_w \cap \Omega'_w$ is a critical point of the function $u'_1 - u_2$.
Notice that $u_1'$ and $u_2$ are both solutions of the $g_{\text{\sffamily{trans}}}$-minimal surface equation \eqref{eqn:translatorequationR3}, which is quasilinear, so their difference $ {v} =u'_1 - u_2$ solves a second-order linear elliptic equation:
\begin{equation}\label{pde-2}
   a^{ij} D_{ij} {v}  + b^i D_i {v} + c {v} = 0
\end{equation}
where $a^{ij}$, $b^i$, and $c$ are smooth functions of $(x,y)$.\footnote{See, for example, equation~(10.22) and the subsequent displayed formula in~\cite{GT}.
There, the derivation is for solutions of differential inequalities, but the same proof works when equality holds.}
Then, it is well-known that the critical points of a solution of \eqref{pde-2} are isolated; see, for instance, the survey paper \cite{magnanini}. 
{This shows that the function $u'_1 - u_2$ has isolated critical points if it is non-constant.}

\begin{lemma}\label{lemma:xi_2neq0}
For any open set $U \subset \Omega_w$, we can select a point $p_0 \in U$ such that the corresponding $q_0 \neq p_0$ and $\vec{\xi}  := p_0 - q_0 = (\xi_1, \xi_2)$ has $\xi_2 \neq 0$.
\end{lemma}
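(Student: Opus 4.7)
The plan is a proof by contradiction that uses real analyticity to globalize a local obstruction and then combines the translator PDE with the boundary data to force $u_1 - u_2$ to be constant.

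Suppose the lemma fails on some open $U \subset \Omega_w$. Since $u_1$ and $u_2$ are real-analytic solutions of the elliptic analytic equation \eqref{eqn:translatorequationR3}, and each Gauss map is a diffeomorphism onto the common region $\pR_w$, the composition $F := (Du_1)^{-1} \circ Du_2 \colon \Omega_w \to \Omega_w$ is a well-defined real-analytic diffeomorphism with $F(p_0) = q_0$. Its fixed-point set $\{F = \mathrm{id}\}$ coincides with the critical set of $u_1 - u_2$, which is discrete under the standing assumption that $u_1 - u_2$ is non-constant. Off this discrete set, the failure of the lemma forces $\pi_2 \circ F = \pi_2$ on $U$, and by the identity theorem for real-analytic functions this propagates to all of $\Omega_w$. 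So globally $F(x,y) = (h(x,y), y)$ for a real-analytic function $h$.

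Differentiating the defining relation $Du_1(h(x,y), y) = Du_2(x,y)$ in both $x$ and $y$, and equating the two resulting expressions for $u_{2,xy}$, yields
\begin{equation*}
u_{1,xx}(h,y)\, h_y \;=\; u_{1,xy}(h,y)\,(h_x - 1). \qquad (\ast)
\end{equation*}
Substituting the first- and second-derivative identities into the translator PDE for $u_2$ and subtracting the translator PDE for $u_1$ at $(h,y)$ yields a second linear relation in $(h_x - 1, h_y)$; combining it with $(\ast)$ and clearing denominators reduces to $(h_x - 1)\cdot Q\bigl(u_{1,xx}(h,y), u_{1,xy}(h,y)\bigr) = 0$, where $Q(a,b) := (1 + u_{1,y}^2)\, a^2 - 2 u_{1,x} u_{1,y}\, ab + (1 + u_{1,x}^2)\, b^2$. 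The symmetric matrix of $Q$ has positive diagonal and determinant $1 + |Du_1|^2 > 0$, so $Q$ is positive definite. Thus $h_x \equiv 1$, unless $u_{1,xx}$ and $u_{1,xy}$ vanish simultaneously on an open set---by analyticity that would force $u_1 = \alpha x + \beta(y)$ globally, which is incompatible with the sign-of-$x$ dependence in $u_1(x,0) = +\infty$ for $x<0$ and $-\infty$ for $x>0$ (an affine function of $x$ has a single limit intercept as $y \to 0^+$). Hence $h_x \equiv 1$, and then $(\ast)$ gives $h_y \equiv 0$, so $h(x,y) = x + c$ for some $c \in \bR$.

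Integrating $Du_2(x,y) = Du_1(x+c, y)$ on the connected strip $\Omega_w$ yields $u_2(x,y) = u_1(x+c, y) + \text{const}$. The sign transition of $u_2$ at $x = 0$ on $\{y = 0\}$ must match that of $u_1(\cdot + c, \cdot)$ at $x = -c$, which forces $c = 0$; then $u_1 - u_2$ is constant, contradicting the standing assumption. The main obstacle is the algebraic step extracting the positive-definite quadratic form $Q$: once the two linear relations in $(h_x - 1, h_y)$ are in hand, positive-definiteness of $Q$---a direct consequence of the strict ellipticity $1 + |Du|^2 > 0$ of the translator equation---is the clean lever that forces $h(x,y) = x + c$, after which matching of boundary data closes the argument.
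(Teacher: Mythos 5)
Your proof is correct but takes a genuinely different, more computational route than the paper's.  Both proofs begin identically: assume $\xi_2 \equiv 0$ on $U$, which says exactly that $\pi_2 \circ F = \pi_2$ on $U$ (away from the discrete critical set), where $F := \nu_{\Sigma_1}^{-1}\circ\nu_{\Sigma_2}$ is the composition of the Gauss-map diffeomorphisms.  From there the paper's argument is soft.  It writes $F|_U(p) = p + (g(p),0)$, sets $u_1^\eta(p):=u_1(p-(\eta,0))$, and notes that if $g$ were constant $\equiv\eta$ on any set of positive measure then $D(u_1^\eta-u_2)$ would vanish there, so by weak unique continuation $u_1^\eta-u_2$ would be globally constant, and the boundary data would force $\eta=0$ and $u_1-u_2=\text{const}$, contrary to the standing assumption.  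Hence $g(U)$ has positive measure, so by Sard's theorem some $\eta\in g(U)$ is a regular value with a one-dimensional fibre $g^{-1}(\eta)\subset U$; along this curve $D(u_1^\eta-u_2)\equiv 0$, producing a non-isolated critical point of a solution of a second-order linear elliptic equation, which is impossible.

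Your version instead promotes $\pi_2\circ F=\pi_2$ to all of $\Omega_w$ by real-analyticity, differentiates the relation $Du_1(h(x,y),y)=Du_2(x,y)$ together with the two copies of the translator PDE, and assembles a $2\times 2$ homogeneous linear system in $(h_x-1,h_y)$ whose determinant is exactly the positive-definite quadratic form $Q(u_{1,xx},u_{1,xy})$ (positive-definiteness being the strict ellipticity $1+|Du_1|^2>0$).  After discarding the degenerate alternative $u_{1,xx}\equiv u_{1,xy}\equiv 0$ via the boundary data (one could equally cite the Gauss-map diffeomorphism, since $Du_1$ would then have one-dimensional image), you obtain $h(x,y)=x+c$, and boundary matching of the sign transition at $x=0$ forces $c=0$, contradicting nonconstancy of $u_1-u_2$.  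The computation is correct; the only small looseness is the phrase ``reduces to $(h_x-1)\cdot Q=0$'' --- strictly, what the system gives pointwise is that either $(h_x-1,h_y)=(0,0)$ or $Q=0$, but your subsequent dichotomy is the right one and the conclusion stands.  What the paper's approach buys is economy: Sard plus isolated critical points avoids any second-derivative manipulation.  What yours buys is structural information: it identifies $F$ explicitly as a horizontal translation in the degenerate case, a fact the measure-theoretic argument never surfaces.
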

\begin{proof}
The point $q_0$ associated to $p_0$ is constructed above using the Gauss map diffeomorphisms, via the map $F := \nu_{\Sigma_1}^{-1} \circ \nu_{\Sigma_2}: \Omega_w \to \Omega_w$.  
Assuming for contradiction that $\xi_2 \equiv 0$ on $U$, we obtain a smooth map $g: U \to \bR$ such that $F|_U(p) = p + (g(p),0)$, so each connected component of $g(U)$ is a point or an interval.
For some $\eta \in \bR$, we denote (only here) $u^{\eta}_1(p) := u_1(p - (\eta,0))$.  
For any $\eta$, {if} $u_1^{\eta} - u_2$ {is constant} on a set of positive measure, {it follows by weak continuation that $u_1^{\eta} - u_2$ is constant on all of $\Omega_w$; then, looking at the boundary values forces $\eta = 0$ and $u_1 - u_2 = \text{\sffamily{const.}}$, contrary to our assumptions.}
Hence, $g(U) \subset \bR$ has positive measure, whereby a.e. non-empty preimages $\Gamma := g^{-1}(\eta) \subset U$ are $1-$dimensional submanifolds (by Sard's theorem). 
These would form non-isolated critical points of $u_1^{\eta} - u_2$, which is impossible, as we reasoned above.
\end{proof}
\begin{remark}[On $\vec{\xi} \neq 0$]\label{rmk:xineq0}
In what follows, we always consider $\vec{\xi} := p_0 - q_0 = (\xi_1, \xi_2)$ with $\xi_2 \neq 0$; this means that the common domain of definition $\Omega_w \cap \Omega'_w$ of $u'_1, u_2$ is strictly smaller than both $\Omega_w, \Omega'_w$; its boundary consists of one line from $\Omega_w$ and one from $\Omega'_w$.

This property is crucial, and the reason why our subsequent geometric arguments work.
The careful reader will notice that if the strips were allowed to have different widths $w' < w$, then we could have $\Omega'_{w'} \subset \sfint(\Omega_w)$, which would allow different configurations without producing a contradiction.
Indeed, there exist helicoids and pitchforks of all widths $w < \pi$ and $w \geqslant \pi$, respectively, that are pairwise distinct.

Throughout the paper, we will repeatedly use the fact that $u'_1$ and $u_2$ are solutions of the elliptic PDE \eqref{eqn:translatorequationR3}, so they are analytic. This means that if they coincide over an open subset, they would coincide over $\Omega_w \cap \Omega'_w$, by the unique continuation principle. 
For $\vec{\xi} \neq 0$, this is impossible: if $\xi_2 \neq 0$, then exactly one of $u'_1, u_2$ tends to $\pm \infty$ on each boundary line of $\Omega_w \cap \Omega'_w$, hence $u'_1 - u_2$ is non-constant; if $\xi_2 = 0$ and $\xi_1 \neq 0$, then $u'_1$ and $u_2$ have different $\pm \infty$ sign changes along the $x-$axis, so $u'_1 - u_2 \neq \text{\sffamily{const}}$.
\end{remark}
Since $p_0$ is an isolated critical point of $u'_1 - u_2$, we can study the arcs of the zero-level set $\{u'_1 - u_2 = 0\}$ through $p_0$.\footnote{The study of other level sets $\{u'_1 - u_2 = \lambda \}$ is equivalent, upon vertical translation of $\Sigma'_1 \cap \Sigma_2$, so we may without loss of generality we may assume that $u'_1(p_0) = u_2(p_0)$.} 
By \cite[Proposition 4.2]{HMW1}, proved using a Morse-theoretic argument, one deduces that the level set $\{ u'_1 - u_2 = 0\}$ consists of collections of an even number of arcs that pass through the critical point $p_0$ and suitable pairs of arcs assemble into analytic curves.
Since $p_0$ is a critical point, the level set through it is not a smooth submanifold locally at $p_0$, meaning that at least two curves intersect there, giving at least four arcs through $p_0$.
{In Lemma \ref{lemma:TypeOfArcs} below, we will sort these arcs into different types based on their endpoints at infinity.
Our key argument, using a rotational maximum principle, will demonstrate that there cannot exist more than one arc of each of these types.}

\subsection{Rotating translators}\label{subsect:RotatingTranslators}

For an angle $\theta > 0$, we denote by $R^{\theta}_{p_a}$ the rotation by $\theta$ with respect a point $p_a = (x_a, y_a,0)$, to be determined later, with axis parallel to the $z-$axis; restricted to the $(x,y)-$plane, this is $\begin{pmatrix}
            \cos \theta & - \sin \theta \\ 
            \sin \theta & \cos \theta
\end{pmatrix} \begin{pmatrix}
        x - x_a \\ y - y_a
        \end{pmatrix}$.
The image of an object under $R^{\theta}_{p_a}$ is denoted by the subscript $\theta$, so the images of the domain $\Omega'_w$, the translator $\Sigma'_1$, and functions $v: \Omega_w \to \bR, v': \Omega'_w \to \bR$
are, respectively, $\Omega'_{w,\theta}, \Sigma'_{1,\theta}$, and $v_{\theta}: \Omega_{w,\theta} \to \bR, v'_{\theta}: \Omega'_{w,\theta} \to \bR$.
The operation $R^{\theta}_{p_a}$ is an isometry of the metric $g_{\text{\sffamily{trans}}}$, hence, the translator equation \eqref{eqn:translatorequationR3} is invariant under it.
Thus, the rotated function $u'_{1,\theta}$ continues to solve equation \eqref{eqn:translatorequationR3} on the interior of $\Omega'_{w,\theta}$.
The region $\overline{\Omega_w \cap \Omega'_{w,\theta}}$ is compact: it is the parallelogram bounded between the lines $\{ y = 0, y=w \}$ and their images under $R^{\theta}_{p_a}$ composed with translation by $\vec{\xi}$.

The study of the function $u'_{1,\theta} - u_2$ on $\Omega_w \cap \Omega'_{w,\theta}$ amounts to examining the intersection of the translators $\Sigma'_{1,\theta}$ and $\Sigma_2$ along their fundamental domains.
If these have a well-behaved contact point there, the uniqueness of the translator follows from the tangency principle for translators, cf. \cite[Theorem 2.1]{MSHS15}, which uses the maximum principle to show that two translators must coincide if they have touching interiors and (locally, near the tangency point) lie entirely on one side of each other.
We will hence prove the uniqueness of the level set arcs of different types through $p_0$ as follows: we show that they would otherwise form an infinite sub-region of $\Omega_w \cap \Omega'_{w,\theta}$, over which the surfaces $M'_{1,\theta}, M_2$ can be rotated to satisfy the conditions of the tangency principle, with their contact occurring only along interior points. 
This would imply that the surfaces $M'_{1,\theta}, M_2$ coincide, due to the tangency principle, which contradicts the boundary conditions imposed on the functions $u'_{1,\theta}, u_2$.

To justify the argument presented above, we show that the interiors of these surfaces can be parametrized using the distance from the vertical axis through $p_a = (x_a, y_a, 0)$ and the $z-$coordinate; these, along with the azimuthal angle $\theta_{p_a}$, form the cylindrical coordinate system centered at $p_a$. 
We refer to a surface as a $\vartheta-$\textit{graph} if $\theta_{p_a}$ is expressible as a function of the other two parameters. 
More precisely, we use the following definition.
\begin{definition}[$\vartheta-$graph]
For a point $p_a = (x_a, y_a,0)$ on the $(x,y)-$plane, denote by $\rho_{p_a}(P) = \mr{dist}(P, \{ (x,y) = (x_a,y_a) \})$ the ambient Euclidean distance (in $\bR^3$) of a point $P$ from the vertical line through $p_a$, the axis of the cylindrical rotation.\footnote{To avoid confusion, here points of $\bR^3$ are denoted with uppercase letters $P,Q$ etc., while points of a planar domain $\Omega \subset \bR^2$ are denoted with lowercase letters $p,q,$ etc.}
We call an embedded surface $M \subset \bR^3$ a $\vartheta-$\textit{graph}, or $\vartheta-$\textit{graphical} over a domain $W \subset \bR_{\rho}^+ \times \bR_z$, if there is a point $p_a$ on the $(x,y)-$plane, outside $M$, such that:
\begin{enumerate}[(i)]
    \item $M$ is contained in a cylindrical sector of angle $\alpha < 2\pi$ centered at $p_a$;\footnote{A \textit{cylindrical sector} of angle $\alpha < 2\pi$ centered at $p_a$ is the interior of the half-space contained between two vertical half-planes passing through $p_a$ and intersecting at angle $\alpha$.} and 
    \item the ``cylindrical projection'' map to radius-height coordinates
\begin{eqnarray} \label{eqn:varphi-CoordinateMap}
\varphi_{p_a}|_M :  M &\longrightarrow &(0,\infty) \times \bR, \\ \varphi_{p_a}: \bR^3 \ni P &\longmapsto &\left( \rho_{p_a}(P), z(P) \right) \in [0,\infty) \times \bR \nonumber
\end{eqnarray}
is a diffeomorphism with image $W$.
\end{enumerate}
\noindent
Equivalently, the image of $M$ under the \textit{azimuthal angle function} $\theta_{p_a}$ is a graph $\vartheta$ over the region $W$ in the $(\rho_{p_a},z)-$plane. 
Here, $(\rho_{p_a}, \theta_{p_a}, z)$ are the cylindrical coordinates on $\bR^3$ centered at $p_a$,\footnote{The assumption that $M$ is contained in a cylindrical sector of angle $\alpha <2 \pi$ guarantees that the azimuthal angle $\theta_{p_a}$ is well-defined, up to translation by a constant, hence cylindrical coordinates are well-defined.}
 {so $\vartheta: W \to (0,\alpha)$ is given by $\vartheta(\varphi_{p_a}(P)) := \theta_{p_a}(P)$.}
\end{definition}
\begin{remark}
In this paper, all $\vartheta-$graph surfaces $M$ that we consider will be graphs (of a function, in the usual sense) over a region contained in a circular sector of angle $\alpha < \pi$, hence the first property \textbf{(i)} will be automatic.
\end{remark}
Notice that the preimage of a point $(s_1, s_2) \in \bR_{\rho}^+ \times \bR_z$ under the map $\varphi_{p_a}$ is given by the intersection of $M$ with the horizontal circle centered at $(x_a, y_a, s_2)$, with normal direction the vertical line and radius $s_1$; this is the set of points of the form 
\[ 
\varphi_{p_a}|_M^{-1}((s_1, s_2)) = M \cap \{ (x,y,s_2) : (x-x_a)^2 + (y-y_a)^2 = s_1^2 \}.
\]
It is clear that the surface $M$ is $\vartheta-$graphical for some $p_a$ if and only if so is its image under $R_{p_a}^{\alpha}$, for any $\alpha$.
Notice that the map $\varphi_{p_a}$ on $\bR^3$ is proper, since all the surfaces that we are considering are properly embedded.
 {
The $\vartheta-$graph properties are seen to imply that:
\begin{lemma}\label{lemma:SameImage}
Consider a simply connected domain $\Omega \subset \bR^2$ and functions $v_1, v_2: \Omega \to \bR$ such that both $V_i := \sfgr(v_i|_{\Omega})$ are $\vartheta-$graphs for the same point $p_a$.   
Assume that $V_1 \cap V_2$ is simply connected and $v_1 \equiv v_2$ on $\partial \Omega$.
Then, $V_1$ and $V_2$ have the same image $W$ under $\varphi_{p_a}$, hence are expressible as graphs $\vartheta_1, \vartheta_2$ over $W$ via their azimuthal angle $\theta_{p_a}$, and $\vartheta_1 \equiv \vartheta_2$ on $\partial W$. 
\end{lemma}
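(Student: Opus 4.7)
The plan is to reduce the statement to a slice-by-slice analysis in the cylindrical radial coordinate $\rho_{p_a}$, exploiting the injectivity built into the $\vartheta$-graph hypothesis. First, for each fixed $s>0$ I would introduce the planar slice
\[
\Gamma_s := \{(x,y) \in \Omega : \rho_{p_a}(x,y) = s\},
\]
which is the intersection of $\Omega$ with the circle of radius $s$ centered at $(x_a, y_a)$. Since each $V_i$ lies in a cylindrical sector of angle $< 2\pi$, the slice $\Gamma_s$ contains no full circle and is a disjoint union of open arcs whose closures have endpoints on $\partial \Omega$.

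The crucial observation is that $v_i$ must be injective on the entire slice $\Gamma_s$: otherwise, two distinct points $p,q \in \Gamma_s$ with $v_i(p)=v_i(q)$ would lift to graph points $(p,v_i(p)),(q,v_i(q)) \in V_i$ sharing the same cylindrical coordinates $(s,v_i(p))$, contradicting the fact that $\varphi_{p_a}|_{V_i}$ is a diffeomorphism onto $W_i$. Hence, on each connected arc $\gamma \subset \Gamma_s$, continuous injectivity forces $v_i|_\gamma$ to be monotonic in the angular parameter. Combining this with the boundary condition $v_1 \equiv v_2$ on $\partial \Omega$ (which matches the limiting endpoint values), I would conclude $v_1(\gamma) = v_2(\gamma)$ as open intervals in $\bR$ for every arc $\gamma$; global injectivity on $\Gamma_s$ keeps the images over distinct arcs pairwise disjoint, so $v_1(\Gamma_s) = v_2(\Gamma_s)$ as subsets of $\bR$. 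This translates into
\[
W_1 \cap (\{s\} \times \bR) = W_2 \cap (\{s\} \times \bR),
\]
and letting $s$ vary establishes $W_1 = W_2 =: W$.

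For the boundary identification, I would note that $\partial V_1 = \sfgr(v_1|_{\partial \Omega}) = \sfgr(v_2|_{\partial \Omega}) = \partial V_2$ as subsets of $\bR^3$, so that $\varphi_{p_a}(\partial V_1) = \varphi_{p_a}(\partial V_2) = \partial W$ unambiguously. Because the azimuthal angle $\theta_{p_a}(P)$ is an intrinsic function of $P \in \bR^3$ and not of the surface through which $P$ is viewed, the restrictions $\vartheta_1, \vartheta_2$ to this common boundary must coincide.

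The main obstacle lies in controlling the slice topology when $\Gamma_s$ breaks into several components: one must verify that the arcs genuinely terminate on $\partial \Omega$ (rather than degenerating or escaping into the ambient sector boundary) and that the monotonic branches on different components are paired correctly between $V_1$ and $V_2$ by their common endpoint values. The simple-connectedness hypothesis on $V_1 \cap V_2$ enters here as a compatibility condition that rules out topologically nontrivial crossings of the two graphs, so that the slice-by-slice correspondence assembles coherently rather than shuffling components; without it, one could imagine configurations where $\Gamma_s$ has components matched across $v_1$ and $v_2$ in inequivalent ways, obstructing the identification of $W_1$ with $W_2$.
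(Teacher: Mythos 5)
Your proposal is correct and follows the paper's proof essentially verbatim: slice $\Omega$ by circles $C_s(p_a)$, observe that injectivity of $\varphi_{p_a}|_{V_i}$ forces $v_i$ to be injective hence monotone on each arc, read off the image of each arc as the interval between its boundary endpoint values, and conclude $W_1 = W_2$ from $v_1 \equiv v_2$ on $\partial\Omega$; the boundary identification $\vartheta_1 \equiv \vartheta_2$ on $\partial W$ follows because $\partial V_1 = \partial V_2$ as subsets of $\bR^3$ and $\theta_{p_a}$ is an ambient function. Your final paragraph worries about a nonexistent matching problem: since $V_1$ and $V_2$ are graphs over the \emph{same} domain $\Omega$, the arcs $\gamma \subset \Gamma_s$ are literally the same subsets of $\Omega$ for both surfaces, so there is nothing to ``pair correctly'' --- for each fixed arc $\gamma$, $v_1(\gamma)$ and $v_2(\gamma)$ are intervals with the same endpoints and hence equal, and the union over $\gamma$ gives $v_1(\Gamma_s) = v_2(\Gamma_s)$ directly, without invoking pairwise disjointness or the simple-connectedness of $V_1 \cap V_2$ at this step.
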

\begin{proof}
For a $\vartheta-$graph as $V_i = \sfgr(v_i|_{\Omega})$, the image $\varphi_{p_a}(V_i)$ is uniquely determined by $v_i|_{\partial \Omega}$.
This is because for any arc $\alpha_r \subset C_r(p_a) \cap \Omega$ of the circle of radius $r$ centered at $p_a$, $\rho_{p_a}|_{\alpha_r} \equiv r$ implies that $v_i|_{\alpha_r}$ is injective (because $\varphi_{p_a}|_{V_i}$ is injective), hence monotone and  $v_i(\alpha_r)$ is the interval with endpoints in $v_i(\partial \alpha_r)$.
For arc components of $\alpha_r = C_r(p_a) \cap \Omega$, we have $\partial \alpha_r = C_r(p_a) \cap \partial \Omega$.
From this, the
 surfaces $V_1, V_2$ have the same boundary and bound a simply connected region in $\bR^3$, so $v_1|_{\partial \Omega} = v_2|_{\partial \Omega}$ means that $\varphi_{p_a}(V_1) = \varphi_{p_a}(V_2) = W$.
By definition, $V_1, V_2$ are thus graphs $\vartheta_1, \vartheta_2$ over $W$, and $\vartheta_1|_{\partial W} = \vartheta_2|_{\partial W}$ due to $v_1|_{\partial \Omega} = v_2|_{\partial \Omega}$.
\end{proof}}
For pitchforks, being asymptotic to the vertical plane $y=w$ as $z \to - \infty$ will help us show that they are $\vartheta-$graphical, asymptotically in the $(-1,0)-$direction, along the slab $\{ 0 < y < w\}$.
For helicoids, the symmetry of their boundary data in both $(\pm 1,0)-$directions will enable us to establish this asymptotic property in both directions. 
More generally,
\begin{lemma}\label{Lemma:GraphicalImmersion}
Let $\Omega \subset \bR^2$ be a simply connected domain with boundary $\partial \Omega = C_1 \cup C_2$ and consider a smooth function $u: \Omega \to \bR$ whose graph is denoted by $M$.
Assume that $\rho_{p_a}(C_1) \cap \rho_{p_a}(\Omega) = \varnothing$ and $u|_{C_2} = \pm \infty$.
If the map $\varphi_{p_a}|_M$ is an immersion onto its image, then it is diffeomorphism (equivalently, $M$ is a $\vartheta-$graph).
This is equivalent to the vectors $(\partial_x u(p), \partial_y u(p))$ and $\vec{p} - \vec{p}_a$ never being collinear for $p \in \Omega$.
\end{lemma}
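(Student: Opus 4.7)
The plan is to reduce to a planar map and then use a proper-covering argument. Let $\Phi: \Omega \to [0,\infty) \times \bR$ be defined by $\Phi(p) := (\rho_{p_a}(p), u(p))$; then $\varphi_{p_a}|_M$ is conjugate to $\Phi$ via the graph projection $M \to \Omega$. The Jacobian determinant of $\Phi$ is $\rho_{p_a}^{-1}\bigl((x-x_a)u_y - (y-y_a)u_x\bigr)$, so its non-vanishing is equivalent both to $\varphi_{p_a}|_M$ being an immersion and to the collinearity statement in the last sentence of the lemma; this covers the ``equivalently'' clause and upgrades ``immersion'' to ``local diffeomorphism''. I then show that $\Phi: \Omega \to W := \Phi(\Omega)$ is proper and that $W$ is simply connected; a proper local diffeomorphism between connected $2$-manifolds is a covering map, and a covering of a simply connected space by a connected space is a diffeomorphism.

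For properness, any compact $K \subset W$ lies in some box $[r_-, r_+] \times [z_-, z_+]$ with $[r_-, r_+] \subset \rho_{p_a}(\Omega)$. A sequence $p_n \in \Phi^{-1}(K)$ has $\rho_{p_a}(p_n) \leq r_+$, so it subconverges in $\bR^2$ to some $p_\infty \in \overline{\Omega}$. One cannot have $p_\infty \in C_1$, since then $\rho_{p_a}(p_\infty) \in [r_-, r_+] \subset \rho_{p_a}(\Omega)$ would contradict $\rho_{p_a}(C_1) \cap \rho_{p_a}(\Omega) = \varnothing$; one cannot have $p_\infty \in C_2$, since then $u(p_n) \to \pm\infty$ would violate $u(p_n) \in [z_-, z_+]$. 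Hence $p_\infty \in \Omega$, so $\Phi^{-1}(K)$ is compact.

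The crucial step is to identify $W$. For any $r \in \rho_{p_a}(\Omega)$, the level set $L_r := \Omega \cap C_r(p_a)$ is an open subset of the circle, i.e., a disjoint union of open arcs. By the hypothesis on $C_1$, every arc endpoint lies on $C_2$, where $u = \pm \infty$. The immersion condition forces $\nabla u$ and $\nabla \rho_{p_a}$ to be linearly independent everywhere, so along any arc $\alpha \subset L_r$ (on which $\nabla \rho_{p_a}$ is normal) the derivative of $u|_\alpha$ never vanishes; hence $u|_\alpha$ is strictly monotone. Combined with $u \to \pm \infty$ at the two endpoints, monotonicity forces the two endpoint limits to have opposite signs, so $u|_\alpha$ is a diffeomorphism onto $\bR$. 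Therefore $W \cap \{\rho = r\} = \bR$ for each $r \in \rho_{p_a}(\Omega)$, whence $W = \rho_{p_a}(\Omega) \times \bR$. As $\Omega$ is connected, $\rho_{p_a}(\Omega) \subset [0,\infty)$ is an interval, and $W$ is simply connected.

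Combining the three ingredients, $\Phi: \Omega \to W$ is a covering map from a connected space onto a simply connected one, so it has degree $1$; i.e., it is a diffeomorphism, and transferring back $\varphi_{p_a}|_M$ is a diffeomorphism so $M$ is a $\vartheta$-graph. The main obstacle is the monotonicity/range analysis on the arcs of $L_r$: the argument is delicate because as $r$ varies the arcs of $L_r$ can split or merge, and one needs the immersion hypothesis, the $\pm\infty$ boundary data on $C_2$, and the extremal-$\rho$ role of $C_1$ to conspire so that $u|_\alpha$ hits both $\pm\infty$ (hence surjects onto $\bR$) on every arc $\alpha$; this is what ultimately fills $W$ to the full cylinder $\rho_{p_a}(\Omega) \times \bR$ and makes it simply connected.
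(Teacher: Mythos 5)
Your proof is correct, and it follows the same two opening steps as the paper (the Jacobian-determinant reduction for the ``collinearity $\Leftrightarrow$ immersion'' equivalence, and a properness argument by ruling out accumulation on $C_1$ and $C_2$), but your final step takes a genuinely different and arguably cleaner route. The paper shows $\varphi_{p_a}|_M$ is a proper local diffeomorphism, hence a finite-sheeted covering, and then invokes simple connectivity of $M$ to conclude it is injective; as written, this step is terse because a finite-sheeted covering with simply connected \emph{total} space need not be one-to-one a priori --- one must also know the base has trivial $\pi_1$, which here follows from the (unstated) fact that open subsets of $\bR^2$ have free fundamental group, and a finite free group is trivial. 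You sidestep this entirely by identifying the image explicitly: the monotonicity of $u$ along each circular arc (forced by the immersion condition) together with the $\pm\infty$ data on $C_2$ pins the image down as the full cylinder $\rho_{p_a}(\Omega) \times \bR$, which is manifestly simply connected, and a covering of a simply connected base is trivial. Incidentally, your arc-monotonicity computation is the same mechanism the paper later uses in its Lemma on shared images (Lemma~\ref{lemma:SameImage}), so you have effectively merged that observation into the proof of this lemma. The one implicit assumption both you and the paper share is that $u$ genuinely tends to $\pm\infty$ at \emph{every} point of $C_2$ (no ambiguous transition points land on an arc endpoint), which holds in all the paper's applications. Your proof buys a self-contained argument free of the planar-$\pi_1$ fact, and as a bonus yields the explicit description $W = \rho_{p_a}(\Omega)\times\bR$, which the paper's version does not record.
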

\begin{proof}
To prove the last equivalence, we precompose $\varphi_{p_a}|_M$ with the diffeomorphism $\Omega \ni p \mapsto (p,u(p)) \in M$ and we show that it is an immersion if and only if the map $\Omega \ni p \mapsto ( \rho_{p_a}(p), u(p)) \in \bR^+ \times \bR$ is an immersion as well.
The first entry then simplifies to $\rho_{p_a}(P) = \mr{dist}(P, \{ (x,y) = (x_a, y_a)\} = |p - p_a|$, which is smooth everywhere away from $p_a$, with differential $d \rho_{p_a} = \begin{pmatrix}
    \partial_x \rho_{p_a} \\ \partial_{y} \rho_{p_a}
\end{pmatrix}$, which is collinear with $\vec{p} - \vec{p}_a$. 
Since $d u(p) = (\partial_x u(p), \partial_y u(p))$, the composition with the map $\varphi_{p_a}|_M$ (hence, $\varphi_{p_a}|_M$ itself) is an immersion if and only if these are never collinear.
Then, $d (\varphi_{p_a}|_M)$ is everywhere bijective, so $\varphi_{p_a}|_M$ is an open map (invariance of domain), hence a local diffeomorphism.

Now we show the first assertion. We first prove that if $\varphi_{p_a}|_M$ is an immersion then it is a perfect map. 
To show that it has finite fibers, we proceed by contradiction: if some $(s_1, s_2) \in \bR^+ \times \bR$ had infinite fiber, its preimages would be an infinite subset of the horizontal circle of radius $s_1$ centered at $(p_a, s_2)$, which is compact, so there would be an accumulation point $(p_{\infty}, s_2)$ with $p_{\infty} \in \overline{\Omega}$ and a sequence $\{ (p_n, s_2 = u(p_n)) \} \subset \varphi_{p_a}|_M^{-1}((s_1, s_2))$ converging to this.
Then, $\rho_{p_a}(p_{\infty}) = s_1 \in \rho_{p_a}(\Omega)$ means $p_{\infty} \not\in C_1$ and $\lim_{p \to p_{\infty}} u(p) = s_2 < + \infty$ means $p_{\infty} \not\in C_2$, so $p_{\infty} \not\in \partial \Omega$ implies $p_{\infty} \in \Omega$ and $u(p_{\infty}) = s_2$, hence $(p_{\infty}, s_2) \in M$.
But then, $\varphi_{p_a}|_M$ would not be a local diffeomorphism at $(p_{\infty},s_2)$, since any neighborhood of the point contains distinct elements of $\{ (p_n, s_2) \}$, on which $\varphi_{p_a}|_M$ cannot be locally one-to-one; a contradiction.

To prove that the map $\varphi_{p_a}|_M: M \to \varphi_{p_a}(M)$ is closed, we consider a sequence $\{ (p_n, u(p_n) \} \subset M$; let its image under $\varphi_{p_a}$ have a limit point in $\varphi_{p_a}(M)$ denoted by $(\rho_{p_a}(q), u(q))$ for $q \in \Omega$.
Then, $|p_n - p_a| < |q-p_a| + \ve$ for a.e. $n$ means that the $\{p_n\}$ are contained in a compact subset of $\bR^2$, hence have an accumulation point $p_{\infty} \in \overline{\Omega}$ with $\varphi_{p_a}(p_{\infty}) = \varphi_{p_a}(q)$. 
By the same argument used above, $p_{\infty} \not\in \partial \Omega$, so $p_{\infty} \in  \overline{\{ p_n \}}_{\Omega} \subset \Omega$ means that $\varphi_{p_a}|_M$ is closed.

Since $\varphi_{p_a}|_M$ is a closed, surjective, continuous map with compact (finite) fibers, it is a perfect map, therefore proper.
It is also a local diffeomorphism, hence a finite-sheeted covering map.
$M = \sfgr(u: \Omega \to \bR)$ is homeomorphic to $\Omega$, thus simply connected, so the covering map is one-to-one and $\varphi_{p_a}|_M$ is a diffeomorphism onto $\varphi_{p_a}(M)$.
\end{proof}
This simple lemma helps to establish the validity of rotation arguments in various settings related to translators, e.g., in \cite[Proposition 6.3]{HMW1}.  
In fact, our next lemma shows that suitable regions of pitchforks and helicoids are $\vartheta$-graphs. 
More precisely, we have:
\begin{lemma}\label{lemma:PitchforkRotationalGraph}
For a pitchfork, there is a sufficiently large $R$ such that $M^{<-R}$ is a $\vartheta-$graph.
For a helicoid, there is a sufficiently large $R$ such both $M^{>R}$ and $M^{<-R}$ are $\vartheta-$graphs.
\end{lemma}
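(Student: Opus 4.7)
The plan is to apply Lemma~\ref{Lemma:GraphicalImmersion} via the equivalent immersion criterion stated there. Choosing $p_a = (x_a, y_a, 0)$ with $y_a \notin [0, w]$, the non-collinearity of $\nabla u(p)$ and $\vec{p} - \vec{p}_a$ on $\Omega^{<-R}_w$ is equivalent, in terms of the Gauss map $\nu = (-\nabla u, 1)/\sqrt{1+|\nabla u|^2}$, to
\[
\nu_x(p)(y - y_a) \;\neq\; \nu_y(p)(x - x_a), \qquad p = (x, y) \in \Omega^{<-R}_w.
\]
Since $|y - y_a|$ is bounded by a constant $C$ depending only on $w$ and $y_a$, while $|x - x_a| \geq R - |x_a|$, the collinearity identity would force $|\nu_y(p)| \leq C/(R - |x_a|)$. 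It therefore suffices to establish a uniform positive lower bound $|\nu_y| \geq \delta > 0$ on $\Omega^{<-R}_w$ for all sufficiently large $R$.

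This lower bound is the heart of the proof and uses the asymptotic behavior recalled in the paragraph just before the lemma and in \cite[\S 11, 12]{HMW1}: the pitchfork's left wing is asymptotic to the vertical plane $\{y = w\}$ as $z \to -\infty$, whose unit normal is $(0, 1, 0)$, while the helicoid's two wings exhibit the mirrored asymptotic behavior. Since $\nu : \Sigma \to \pR_w$ is a diffeomorphism onto an open region of the upper hemisphere of $\bS^2$, the image $\nu(M^{<-R})$ converges, as $R \to \infty$, into a neighborhood of the equator points $(0, \pm 1, 0)$, strictly away from the great circle $\{\nu_y = 0\}$. Combined with interior Schauder estimates for~\eqref{eqn:translatorequationR3} applied to large-$|x|$ translates of $M$, this yields the required uniform bound $|\nu_y| \geq \delta$ on the wing.

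With the immersion condition in hand, I would apply Lemma~\ref{Lemma:GraphicalImmersion} to the wing, decomposing the boundary as $C_1 \cup C_2$: the portions of $\{y = 0\} \cup \{y = w\}$ (where $u = \pm \infty$) form $C_2$, while $C_1$ is taken as a large circular arc about $p_a$ rather than the straight segment $\{x = -R\}$, so that the disjoint-radii condition $\rho_{p_a}(C_1) \cap \rho_{p_a}(\Omega) = \varnothing$ is satisfied. The lemma then delivers $\vartheta$-graphicality on this (slightly enlarged) domain, hence on $M^{<-R}$. For the helicoid's right wing $M^{>R}$, the same argument runs with $p_a$ reflected to the opposite side of the strip, exploiting the $(\pm 1, 0)$-symmetry of the boundary data.

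The principal obstacle is the uniform asymptotic bound $|\nu_y| \geq \delta$: upgrading the geometric statement that the wing is asymptotic to a vertical plane into a uniform-in-$p$ control of the Gauss map along the entire wing. This is standard in nature, relying on interior Schauder regularity for~\eqref{eqn:translatorequationR3}, the $x$-translation invariance of the asymptotic profile, and the exclusion --- via the diffeomorphism property of $\nu$ onto $\pR_w$ --- of the problematic equator points $(\pm 1, 0, 0) \in \bS^2$ from the limit set of $\nu$ on the wing.
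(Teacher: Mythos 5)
Your proposal is correct and follows essentially the same route as the paper's proof: it applies Lemma~\ref{Lemma:GraphicalImmersion} with the same circular-arc/infinite-ray boundary decomposition, and it derives non-collinearity from the smooth convergence of the wing to a vertical plane (so that the Gauss map tends to $\pm\mathbf{e}_2$), citing \cite[Theorems~11.1, 12.1]{HMW1} just as the paper does. The only difference is cosmetic: you phrase the key estimate as a uniform lower bound $|\nu_y|\geq\delta$ and argue directly via translation invariance and interior regularity, whereas the paper phrases the equivalent bound as $|\partial_y u/\partial_x u|\geq\varepsilon$ and establishes it by contradiction against the blow-down limit; both rely on the same cited asymptotic result.
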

\begin{proof}
Let $\Omega^{\pm}_{p_a}(R)$ be the smaller region bounded between the rays $\{ y=w, y = 0\}$, both in the direction $(\pm 1,0)$, and the minor arc of the circle $C_R(p_a)$ of radius $R$ centered at $p_a$.
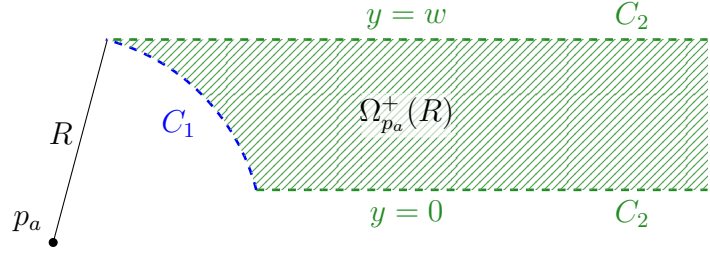
\begin{figure}[ht]
        \centering
\begin{tikzpicture}
    \draw[line width = 0.35mm, ForestGreen,dashed] (2.7,0.7) -- (8.7,0.7); 
    \draw[line width = 0.35mm, ForestGreen,dashed] (0.8,2.7) -- (8.7,2.7); 

    \draw (0,0) -- (0.72,2.7);

    \draw[name path=arc, line width = 0.35mm, blue, dashed] (2.7,0.7) arc
    [
        start angle=14.53,
        end angle= 75.47,
        radius=2.79cm
    ] ;

    \filldraw (0,0) circle (1.5pt) node[above left] {$p_a$};

    \node[ForestGreen, below] at (4.7,0.7) {$y = 0$};
    \node[ForestGreen, above] at (4.7,2.7) {$y = w$};

    \fill[pattern=north east lines, pattern color=ForestGreen!80] (2.7,0.7) rectangle (4.1,2.7);
    \fill[pattern=north east lines, pattern color=ForestGreen!80] (4.1,0.7) rectangle (5.3,1.4);
    \fill[pattern=north east lines, pattern color=ForestGreen!30] (4.1,1.4) rectangle (5.3,2);
    \fill[pattern=north east lines, pattern color=ForestGreen!80] (4.1,2) rectangle (5.3,2.7);
    \fill[pattern=north east lines, pattern color=ForestGreen!80] (5.3,0.7) rectangle (8.7,2.7);

    \node[ForestGreen, above] at (7.7,2.7) {$C_2$};
    \node[ForestGreen, below] at (7.7,0.7) {$C_2$};
    \node[blue, below] at (1.67,1.9) {$C_1$};
    \node at (0.1,1.45) {$R$};
    \node at (4.7, 1.7) {$\Omega_{p_a}^+(R)$};

    \fill[pattern=north east lines, pattern color=ForestGreen!80] (2.7,0.7) -- (2.7,0.7) arc [
        start angle=14.53,
        end angle=75.47,
        radius=2.79cm
    ] -- (2.7,2.7) -- cycle;
\end{tikzpicture}
\caption{ \sffamily The region $\Omega^+_{p_a}(R)$ extending in the $(+1,0)-$direction, used for a helicoid.}
        \end{figure}\label{fig:NewDomain}
\hyperref[fig:NewDomain]{Figure 3} depicts the region $\Omega^+_{p_a}(R)$ extending in the $(1,0)-$direction; for the region $\Omega^-_{p_a}(R)$ extending in the $(-1,0)-$direction, one reflects this with respect to the $y-$axis on the plane.
Letting $C_1$ be the circular arc in $\partial \Omega^{\pm}_{p_a}(R)$ and $C_2$ are the boundary rays  coming from $y=0$ and $y=w$, we see that the function $u$ solving the translator equation with boundary data \eqref{eqn:pitchforkboundaryvalues} or \eqref{eqn:helicoidboundaryvalues} has $u|_{C_2} = \pm \infty$, so the conditions of Lemma \ref{Lemma:GraphicalImmersion} are satisfied. 
For $\varphi_{p_a}$ to be a diffeomorphism when restricted to $\sfgr(u|_{\Omega^{\pm}_{p_a}(R)})$, it suffices to be an immersion.
The result will immediately follow from this by taking $R'$ such that $\Omega^{\pm}_{p_a}(R) \subset \Omega_w^{\gtrless \pm R'}$ and showing that under this condition $(\partial_x u(p), \partial_y u(p))$ and $\vec{p} - \vec{p}_a$ are not collinear.

First, note that if the derivatives $\partial_x u, \partial_y u$ vanish simultaneously, then the Gauss map $\nu$, given here by $p \mapsto \dfrac{(-\partial_x u, -\partial_y u, 1)}{\sqrt{1 + |\nabla u|^2}}$, equals $\mathbf{e}_3$ at that point. 
Since this is a diffeomorphism, there is at most one point $p$ with $\nu(p) = \mathbf{e}_3$, hence for $R \gg 0$ this cannot occur.
\begin{claim}
For a {\bf pitchfork}, there exists a $R_0 \gg 0$ and a $\ve > 0$ such that the slope bound
\begin{equation}\label{eqn:SlopeLowerBound}
\left| \dfrac{\partial_y u}{\partial_x u} \right| \geq \ve
\end{equation}
holds on the region $\Omega_w^{<-R_0}$.\footnote{This includes the possibility $\partial_x u = 0$, in which case the limit becomes $\pm \infty$.}
For a {\bf helicoid}, $R_0,\ve$ as above exist so that \eqref{eqn:SlopeLowerBound} holds on $|x| > R_0$.
\end{claim}
\begin{proof}
    Assume by contradiction that such $R_0, \ve$ do not exist; then, there is a sequence of points $\{ p_n\} \in \Omega_w$ with $x(p_n) \to - \infty$ for a pitchfork (or $\to \pm \infty$ for a helicoid) for which $\left| \frac{\partial_y u}{\partial_x u} \right| \to 0$.
    By result (3) of \cite[Theorem 12.1]{HMW1} for a pitchfork (respectively, result (1) of \cite[Theorem 11.1]{HMW1} for a helicoid), the sequence of translators $M_n := M - (p_n, u(p_n))$ converges smoothly to the vertical plane $\{ y = 0\}$, hence the Gauss map $\nu$ converges to $\pm \mathbf{e}_2$, meaning $\left| \frac{\partial_y u}{\partial_x u} \right| \to + \infty$; this contradicts the assumption $\left| \frac{\partial_y u}{\partial_x u} \right| \to 0$.
\end{proof}

In contrast to \eqref{eqn:SlopeLowerBound}, which holds for any $R \geq R_0$, we have on $\Omega^{\pm}_{p_a}(R)$,
\begin{equation}\label{eqn:paRBound}
\left| \dfrac{y - y_a}{x - x_a} \right| \leq \dfrac{|y| + |y_a|}{\sqrt{R^2 - (|y| + |y_a|)^2}} \leq \dfrac{w + |y_a|}{\sqrt{R^2 - (w + |y_a|)^2}} =: \delta(p_a,R).
\end{equation}
For $R \gg 0$ sufficiently large, the expression remains bounded by some (arbitrarily small) $\delta(p_a,R) < \ve$, so the vector $\vec{p} - \vec{p}_a$ remains close to the $x-$axis and not collinear with $(\partial_x u, \partial_y u)$.
By Lemma \ref{Lemma:GraphicalImmersion} it follows that $\varphi_{p_a}$ restricts to a diffeomorphism on $\sfgr(u|_{\Omega_{p_a}^{\pm}(R)})$.
\end{proof}
\begin{remark} The property of being $\vartheta$-graph for large $R$ does not hold in general for translators. 
For instance, subgraphs of grim reapers $G_w$ over $\Omega_w^{\lessgtr x_0}$ do not have it, because the critical curve $\cC_{p_a} := \left\{ p \in \Omega_w: d \varphi_{p_a}(p) = 0 \right\}$ cuts horizontally through the strip $\Omega_w$ and has components contained in $\Omega_w$ and extending to infinity, for any choice of $p_a$.
\end{remark}
The $\vartheta-$graph property extends to the intersection of the fundamental pieces $M'_{1,\theta} \cap M_2$, hence allowing us to use a common rotational coordinate for both; one reproduces the above argument, noting that the intersection is simply connected.
\begin{lemma}\label{lemma:SimplyConnectedIntersection}
For any simply connected open subset $U \subset \Omega_w$, the parts of the fundamental pieces $\sfgr(u'_{1,\theta}|_{U'_{\theta}}) \subset M'_{1,\theta}$ and $\sfgr(u_2|_U) \subset M_2$ have a simply connected intersection.
\end{lemma}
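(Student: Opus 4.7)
The key reduction is that vertical projection $\pi:\bR^3 \to \bR^2$ identifies the intersection $\sfgr(u'_{1,\theta}|_{U'_{\theta}}) \cap \sfgr(u_2|_U)$ homeomorphically with the zero set
\[
Z := \{ p \in U \cap U'_{\theta} : v(p) = 0 \}, \qquad v := u'_{1,\theta} - u_2,
\]
so the lemma reduces to showing that $Z$ is simply connected. My plan is to first record that, since the translator equation \eqref{eqn:translatorequationR3} does not involve $u$ itself, linearizing along the segment from $u'_{1,\theta}$ to $u_2$ gives a linear elliptic equation $a^{ij} D_{ij} v + b^i D_i v = 0$ \emph{with no zeroth-order term}; in particular, $v$ is real-analytic and satisfies the full strong maximum principle. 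Moreover, by the argument of Remark~\ref{rmk:xineq0} (which applies verbatim after rotation, since the asymptotic values $\pm \infty$ of $u'_{1,\theta}$ and $u_2$ still disagree on each boundary component of $\Omega_w \cap \Omega'_{w,\theta}$), the function $v$ is non-constant on every connected component of the overlap $U \cap U'_{\theta}$.

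The no-loops half of simple connectedness is then clean. Since $U$ is simply connected and $U'_{\theta}$ is a rigid motion of $U$, both are simply connected planar open sets; combining the Jordan curve theorem with the characterization of simply connected planar domains as those whose complement in $\bR^2$ has no bounded component, any Jordan curve $\gamma \subset U \cap U'_{\theta}$ bounds a disk $D \subset \bR^2$ contained in both $U$ and $U'_{\theta}$, and hence in the common connected component of $U \cap U'_{\theta}$ containing $\gamma$. If such a $\gamma$ were contained in $Z$, the strong maximum principle would give $v \equiv 0$ on $D$, and then unique continuation (using analyticity of $v$) would force $v \equiv 0$ on the entire component of $U \cap U'_{\theta}$, contradicting the non-constancy recorded above.

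The genuinely delicate step, and the main obstacle I anticipate, is promoting this to connectedness of $Z$. By the construction preceding Lemma~\ref{lemma:xi_2neq0}, the two graphs meet at the isolated critical point $(p_0, u_2(p_0))$, and the Morse-Rad\'o description of the nodal set at such a critical point (\cite[Proposition~4.2]{HMW1}) produces an even, positive number of analytic arcs through $p_0$, forming one connected component of $Z$. A second component of $Z$ disjoint from this one would have to be an analytic arc running from boundary to boundary of $U \cap U'_{\theta}$. To exclude it, I would track the sign of $v$ near each component of $\partial(\Omega_w \cap \Omega'_{w,\theta})$ using the $\pm \infty$ boundary asymptotics of $u'_{1,\theta}$ and $u_2$, together with the $\vartheta$-graph property of Lemma~\ref{lemma:PitchforkRotationalGraph}, which constrains how $v$ may change sign along circles around $p_a$. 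These should combine to force every nodal arc of $v$ to be accessible from $p_0$ within $U \cap U'_{\theta}$, and it is in this boundary sign-tracking that the hypothesis that $U$ is simply connected plays its essential role.
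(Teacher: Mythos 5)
Your ``no-loops'' half is exactly the paper's argument: project a hypothetical loop in $\sfgr(u'_{1,\theta}|_{U'_{\theta}}) \cap \sfgr(u_2|_U)$ to a Jordan curve $\gamma$ in the $(x,y)$-plane (this works because both surfaces are graphs, so the projection is injective on the intersection), observe that since $U$ and $U'_{\theta}$ are simply connected planar domains the enclosed disk $D$ lies in $U \cap U'_{\theta}$, and then kill the configuration with the maximum principle for the difference function $v$. Your additional observation that the linearization $a^{ij} D_{ij} v + b^i D_i v + c\,v = 0$ actually has $c \equiv 0$ --- because \eqref{eqn:translatorequationR3} depends only on $Du$ and $D^2 u$, not on $u$ itself --- is correct and makes the maximum-principle step cleaner than the paper's citation of the general quasilinear case; it also guarantees you don't need any sign hypothesis on the zeroth-order coefficient.

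Where you go astray is in treating ``connectedness of $Z$'' as a second, unaddressed half of the lemma. The paper's proof is the single sentence you reproduce: it proves only that the intersection (a one-dimensional subset of $\bR^3$) contains no loops, and that is the full content of ``simply connected'' being invoked here and in the downstream Lemma~\ref{lemma:SameImage} --- what matters is that the intersection be a forest, not a single tree. Indeed, connectedness of the nodal set $\{u'_{1,\theta} - u_2 = 0\}$ inside $U \cap U'_{\theta}$ need not hold, and nothing in the subsequent use of Lemma~\ref{lemma:SimplyConnectedIntersection} (the application of Lemma~\ref{lemma:SameImage} inside Claim~\ref{claim:CompactIntersection}) requires it. The boundary sign-tracking you sketch in your last paragraph is not a proof --- ``these should combine to force every nodal arc to be accessible from $p_0$'' is speculation, and were connectedness genuinely required you would have a real gap --- but the cleaner fix is to recognize that the lemma (as used) is only about cycles, drop the connectedness discussion entirely, and stop after the Jordan-curve/maximum-principle step, which you already have.
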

\begin{proof}
Since the $M_{i,U}$ are graphs over the $(x,y)-$plane, if there is a loop in $M'_{1, U'_{\theta}} \cap M_{2,U}$, 
projecting this to $\Omega_w$ would again give a closed loop (Jordan curve, bounding a compact set) $\gamma \Subset \Omega_w \cap \Omega'_{w,\theta}$ on which $u'_1 - u_2 = 0$, but
this is impossible by the maximum principle.
\end{proof}
\noindent
The defining property of $\vartheta-$graphs ensures that when the map $\varphi_{p_a}$ is a diffeomorphism, as in the case of the graphs of $u_i$ over $\Omega^{\pm}_{p_a}(R)$ by Lemma \ref{lemma:PitchforkRotationalGraph}, it takes interiors to interiors and therefore extends to the closure of these graphs inside $\Sigma_i$, taking the part of the boundary $\partial M_i$ to the boundary of $\text{\sffamily{im}}(\varphi_{p_a}) \subset \bR^2$.
Since the intersection of the translators $M'_1 \cap M_2$ is also simply connected, the same argument on proper local diffeomorphisms used in Lemma \ref{Lemma:GraphicalImmersion} implies that the interior of each $M'_1, M_2$ maps to the interior of $W$ and the boundary of each maps to the boundary of the image under $\varphi_{p_a}$.

\subsection{Uniqueness of arcs through $p_0$}
For our arc-counting argument by contradiction, we first classify the possible types of arcs of the level set $\{ u'_1 - u_2 = 0\}$ through the critical point $p_0$ and show that these are unique. 
We use the techniques and ideas of \cite{morserado}.

\begin{lemma}[Types of arcs of the level set through $p_0$]\label{lemma:TypeOfArcs}
The arcs of the zero-level set $\{ u'_1 - u_2 = 0\}$ passing through $p_0$ are contained in
\begin{itemize}
    \item $\sfint(\Omega_w \cap \Omega'_w) \cup \{ \vec{\xi} \, \}$, in the case of pitchfork translators \eqref{eqn:pitchforkboundaryvalues},
    \item $\sfint(\Omega_w \cap \Omega'_w) \cup \{ \vec{\xi} \, \} \cup \{ (\hat{x},w) \}$, in the case of helicoidal translators \eqref{eqn:helicoidboundaryvalues},
\end{itemize}
and have one of the following types:
\begin{enumerate}[(i)]
\item going to infinity in the $(1,0)-$direction;
\item going to infinity in the $(-1,0)-$direction;
\item passing through the point that is the projection of the vector $\vec{\xi}$ to the $(x,y)-$plane;\footnote{This point will also be denoted by  $\vec{\xi}$, for notational convenience.
This occurs in the left configuration of \hyperref[fig:HelicoidConfigs]{Figure 6} below, with $\xi_1 > 0$; see below for a remark on the configuration in case $\xi_2 < 0$.}
\item (only in the helicoidal case) passing through $(\hat{x},w)$.
\end{enumerate}
There exists precisely one arc of type (iii) and of type (iv) passing through $p_0$.
\end{lemma}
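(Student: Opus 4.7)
The plan has three steps: first, determine where the zero set $\{v=0\}$ of $v := u'_1 - u_2$ can reach $\partial(\Omega_w \cap \Omega'_w)$; second, classify the arc types at $p_0$; third, prove uniqueness of the boundary-terminating types (iii) and (iv).

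For the boundary analysis, I would take $\xi_2 > 0$ by Lemma~\ref{lemma:xi_2neq0}, so the common domain $\Omega_w \cap \Omega'_w$ is the strip $\bR \times (\xi_2, w)$. Along $\{y = \xi_2\} \subset \partial \Omega'_w$, the boundary data \eqref{eqn:pitchforkboundaryvalues}/\eqref{eqn:helicoidboundaryvalues} give $u'_1 = +\infty$ on $\{x<\xi_1\}$ and $u'_1 = -\infty$ on $\{x>\xi_1\}$, while $u_2$ is finite (interior of $\Omega_w$); hence $v = \pm\infty$ away from the jump point $\vec\xi = (\xi_1,\xi_2)$. On $\{y = w\} \subset \partial \Omega_w$ the symmetric statement holds: $u_2 = -\infty$ for the pitchfork, or $u_2$ jumps at $(\hat{x}, w)$ for the helicoid, while $u'_1$ is finite. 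Thus $\{v=0\}$ reaches the boundary only at the points listed. For the classification, $v$ solves the linear elliptic equation~\eqref{pde-2} with analytic coefficients, so it is real-analytic with isolated critical points; each arc of $\{v=0\}$ through $p_0$ is therefore a smooth analytic curve that either ends at another interior critical point, exits at one of the admissible boundary points $\vec\xi$ or $(\hat x,w)$, or escapes to infinity within the horizontal strip, necessarily in the $\pm x$ direction. This yields types (i)--(iv).

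For uniqueness of type (iii), suppose two arcs $\gamma_1, \gamma_2$ both connect $p_0$ to $\vec\xi$. Their union is a Jordan curve bounding a topological disk $U \subset \overline{\Omega_w \cap \Omega'_w}$ with $v=0$ on $\partial U \setminus \{p_0, \vec\xi\}$. Passing to a simply connected component $U'$ cut out by any further interior arcs of $\{v=0\}$ that may cross $U$, the function $v$ has a fixed sign on $\sfint(U')$, say $v>0$, so the graph of $u'_1$ lies strictly above that of $u_2$ over $\sfint(U')$ and they agree on $\partial U' \setminus \{\vec\xi\}$. Along the arcs approaching $\vec\xi$, $u'_1 = u_2 \to u_2(\vec\xi) < \infty$, which shows $\vec\xi$ is a removable boundary singularity for $v$ on $U'$; then the tangency principle \cite[Theorem~2.1]{MSHS15} (or equivalently the strong maximum principle for~\eqref{pde-2}) applied to $M'_1|_{U'}$ and $M_2|_{U'}$, which touch along all of $\partial U'$, forces $u'_1 \equiv u_2$ on $U'$, contradicting unique continuation (cf.\ Remark~\ref{rmk:xineq0}). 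The argument for type (iv) at $(\hat x, w)$ is strictly symmetric, swapping the roles of $u'_1$ and $u_2$. Existence of one arc of each type (iii) and (iv) then follows from the Morse-Rad\'o local structure at $p_0$, which provides at least four alternating-sign sectors as in \cite[Proposition 4.2]{HMW1}, combined with the sign change of $v$ across each jump point along $\partial(\Omega_w \cap \Omega'_w)$.

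The main obstacle is the removable-singularity step near $\vec\xi$: one must rigorously ensure that $u'_1$ remains bounded on approach to $\vec\xi$ from within $U'$, which relies on the grim-reaper-type asymptotics of translators near a boundary jump point as established in \cite{HMW1}. This is exactly the sort of control that the $\vartheta$-graph / rotational maximum principle machinery developed in Section~\ref{subsect:RotatingTranslators} is designed to provide, and will also be needed for the unbounded arc types (i) and (ii).
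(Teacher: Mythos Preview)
Your boundary analysis and the classification into types (i)--(iv) are essentially what the paper does. The real divergence is in the uniqueness argument for types (iii) and (iv), and here there is a genuine gap.

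Your Jordan-curve argument requires the maximum principle (or tangency principle) on the region $U'$, but $v = u'_1 - u_2$ is \emph{not} bounded near $\vec{\xi}$: the point $\vec{\xi}$ lies on $\partial \Omega'_w$ and $u'_1$ blows up to $\pm\infty$ in every neighborhood of it. Knowing that $u'_1 = u_2 \to u_2(\vec{\xi})$ \emph{along the arcs $\gamma_1,\gamma_2$} gives no control on $u'_1$ in the interior of $U'$; if $v>0$ there, you have only the one-sided bound $u'_1 > u_2 > -C$, and nothing prevents $u'_1 \to +\infty$ on a sequence in $U'$ approaching $\vec{\xi}$. So $\vec{\xi}$ is not a removable singularity for $v$ in any obvious sense, and the maximum principle does not apply. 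Your suggested fix via the rotational/$\vartheta$-graph machinery is misplaced: that apparatus is built for the semi-infinite arcs of types (i) and (ii), where the point is to make a noncompact region compact by rotation; it says nothing about the local behavior at $\vec{\xi}$.

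The paper's argument is completely different and much shorter: it works upstairs in $\bR^3$ rather than with the difference function. The fundamental piece $M'_1$ is a smooth surface-with-boundary whose boundary is the vertical line $Z' = \{\vec{\xi}\}\times\bR$; in particular $M'_1$ has a \emph{vertical} tangent plane along $Z'$. The graph of $u_2$ is smooth at the point $(\vec{\xi}, u_2(\vec{\xi}))$ with a \emph{non}-vertical tangent plane. Hence $M'_1$ and $\sfgr(u_2)$ meet transversally there, and $M'_1 \cap \sfgr(u_2)$ is a single smooth curve near that point. Its projection is the unique arc of $\{u'_1 - u_2 = 0\}$ reaching $\vec{\xi}$. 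The same reasoning with the roles reversed handles $(\hat{x},w)$. This is what the paper means by saying the arc is ``uniquely determined by $u_2(\vec{\xi})$'': the finite value of the other function picks out the single transversal intersection height on the vertical line.

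A smaller point: your existence argument does not work as written. The local Morse--Rad\'o picture at $p_0$ gives at least four arcs \emph{emanating from $p_0$}, but says nothing about which type they are; and the sign change of $v$ near $\vec{\xi}$ forces an arc of $\{v=0\}$ to end at $\vec{\xi}$, but there is no reason that arc must pass through $p_0$ rather than through some other critical point. For the applications in the paper only the ``at most one'' direction is actually used.
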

\begin{proof}
First, no closed arc of $\{ u'_1 - u_2  = 0\}$ contains $p_0$, else it would bound a set on which $u'_1 - u_2$ is non-zero on the interior, zero on the boundary, and satisfies a linear elliptic PDE coming from \eqref{eqn:translatorequationR3} with boundary value $0$.
These would mean that $u'_1 - u_2$ attains an interior maximum or minimum on this set, which is impossible by the maximum principle.
The containment in $\sfint(\Omega_w \cap \Omega'_w) \cup \{ \vec{\xi} \, \}$, possibly also $(\hat{x},w)$ in the helicoidal case, follows from the argument of Remark \ref{rmk:xineq0}, since exactly one of $u'_1, u_2$ becomes $\pm \infty$ along the boundary lines of $\Omega_w \cap \Omega'_w$, possibly with the exception of $\vec{\xi}$ and $(x,\hat{w})$, while the other remains finite, so these are the only possible intersection points of $\{ u'_1 - u_2 = 0 \}$ with the boundary lines.
Finally, every arc $\gamma: [0,+ \infty) \to \Omega_w \cap \Omega'_w$ not tending to these extends to infinity (proper): if it were bounded, it would have an accumulation point, hence a non-isolated zero of $u'_1 - u_2$; we have seen that this cannot happen.

In general, a unique arc of type (iii) or (iv) through $p_0$ arises for every boundary point of $M'_1 \cup M_2$ projecting to $\sfint(\Omega_w \cap \Omega'_w)$ such that the translator contains the entire vertical line through it. 
These are the points of $\Omega_w, \Omega'_w$ with a sign change of the (infinite) boundary data of the Dirichlet problem for the translator; they come in pairs $(p, p + \vec{\xi})$, of which exactly one is contained in $\Omega_w \cap \Omega'_w$ and uniquely determines the arc by $u'_1(p)$ or $u_2(p + \vec{\xi})$.\footnote{In the configuration of  \hyperref[fig:HelicoidConfigs]{Figure 6}, with $\xi_1 > 0$, these go through $\vec{\xi}$, and $(\hat{x},w)$ in the helicoidal case; for $\xi_2 < 0$, they would connect $p_0$ with $\vec{0}$ and $(\hat{x},w) + \vec{\xi}$.}
\end{proof}
We will also show the uniqueness of arcs of types (i), (ii), using the following setup: assume for contradiction that there are at least two arcs $\gamma_{1,2}: [0,+\infty) \to \Omega_w \cap \Omega'_w$ of the zero-level set $\{ u'_1 - u_2 = 0\}$ starting at $p_0$ and going to infinity in the same direction $(\pm 1,0)$, so these can be viewed as arcs on the original strips $\Omega_w \cap \Omega'_w$.
These cannot intersect again, else they would define between them a closed arc based at $p_0$, which was ruled out above. 
Hence,  without loss of generality we may  assume $\gamma_1$ remains above $\gamma_2$ as $t \to + \infty$, bounding a region $S_0 \subset \Omega_w \cap \Omega'_w$ between them that extends infinitely in the given direction.\footnote{We denoted $S_0$ as the region coming from the level set $\{ u'_1 - u_2 = 0 \}$; below, we will denote $S_{\lambda}$ as the corresponding region coming from $\{ u'_1 - u_2 = \lambda \}$, for arbitrary $\lambda$.}
The arc structure of the level set $\{ u'_1 - u_2 = 0\}$ may generally be quite complicated, for instance having nested regions and additional arc-crossings in addition to $p_0$. 
However, in every compact region the number of arcs within that region is finite, since otherwise we would have an accumulation point, which due to the analyticity of the solutions would imply that $u_1'\equiv u_2$.
\begin{lemma}[Evolution of the level set region]\label{Lemma:LevelSetEvolution}
Assume that at least two arcs $\gamma_i$ as above exist, forming a region $S_0$, and denote $L_1 := \inf_{S_0} (u'_1 - u_2), L_2 := \sup_{S_0} (u'_1 - u_2)$.
For a.e.$\lambda \in [L_1, L_2]$, the level set $\{u'_1 - u_2 = \lambda\}$ along $\Omega_w \cap \Omega'_w$ includes arcs as the $\gamma_i$ that bound a region $S_{\lambda} \subset S_0$ going to infinity in the $(\pm 1,0)-$direction.
For any $p_a, R$, there is a $\lambda \in [L_1,L_2]$ such that $S_{\lambda} \subset \Omega_{p_a}^{\pm}(R) \cap \Omega'_w$.
\end{lemma}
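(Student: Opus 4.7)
The plan is to apply the strong maximum principle to $v := u'_1 - u_2$ on $S_0$. Since the translator equation \eqref{eqn:translatorequationR3} depends only on $Du$ and $D^2 u$ (not on $u$ itself), the zeroth-order coefficient $c$ in the linearized equation \eqref{pde-2} satisfied by $v$ vanishes identically; the strong maximum principle then applies unconditionally. The strategy is to force the extrema of $v$ on $S_0$ to be attained only ``at infinity,'' and then extract the regions $S_\lambda$ via Sard's theorem.

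First, I would normalize by shrinking $S_0$ (if necessary) so that $v$ has constant sign---WLOG $v > 0$, the case $v < 0$ being symmetric---on $\sfint(S_0)$. The analytic zero set $\{v = 0\} \cap \sfint(S_0)$ consists of locally finitely many analytic arcs meeting only at isolated critical points of $v$. A bounded sub-region of $S_0$ bordered by such arcs would have $v \equiv 0$ on its boundary, hence $v \equiv 0$ on it by the strong maximum principle, and by unique continuation $v \equiv 0$ on all of $\Omega_w \cap \Omega'_w$, contradicting Remark~\ref{rmk:xineq0}. An unbounded interior arc of $\{v = 0\}$ through $p_0$ (or any other critical point) must, by Lemma~\ref{lemma:TypeOfArcs}, extend to infinity in the $(\pm 1, 0)$-direction, and we may replace $\gamma_1$ or $\gamma_2$ by such an arc to shrink $S_0$. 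Isolatedness of critical points ensures that this iteration terminates, and after it $L_1 = 0$ and $L_2 > 0$.

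Next, the strong maximum principle rules out an interior supremum: if $v(q) = L_2$ for some $q \in \sfint(S_0)$, then $\{v = L_2\} \cap \sfint(S_0)$ would be both open (strong maximum principle) and closed (continuity) in the connected set $\sfint(S_0)$, hence all of it, which contradicts $v \equiv 0$ on $\partial S_0$ since $L_2 > 0$. Consequently $\sup_K v < L_2$ for every compact $K \subset \overline{S_0}$. By Sard's theorem, for a.e.\ $\lambda \in (0, L_2)$ the level set $\{v = \lambda\} \cap S_0$ is a smooth $1$-manifold, and the implicit function theorem shows that its arcs adjacent to $\gamma_1, \gamma_2$ are $C^\infty$-perturbations of those boundary arcs and hence extend to infinity in the $(\pm 1, 0)$-direction. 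These arcs bound $S_\lambda := \{v > \lambda\} \cap S_0 \subset S_0$, which itself extends to infinity, establishing the first assertion.

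For the second assertion, fix $p_a \in \R^2$ and $R > 0$ and set $K := \overline{B_R(p_a)} \cap \overline{S_0 \cap \Omega'_w}$, which is compact. By the previous step $\mu := \sup_K v < L_2$, so any Sard-regular $\lambda \in (\mu, L_2)$ yields $S_\lambda \cap K = \emptyset$; since $S_0$ extends to infinity only in the $(\pm 1, 0)$-direction, $S_\lambda$ lies entirely in the tail of $S_0$ outside $B_R(p_a)$, i.e., in $\Omega_{p_a}^\pm(R) \cap \Omega'_w$. The main obstacle is the sign-normalization of step one: iteratively replacing boundary arcs while preserving the ``two arcs from a common basepoint running to infinity in a single direction'' structure of $S_0$ requires careful use of the Morse--Rad\'o arc-counting of \cite{morserado}, together with the isolatedness of critical points of $v$ noted earlier in the paper.
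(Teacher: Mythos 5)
Your core strategy matches the paper's: apply the maximum principle to $v=u_1'-u_2$ on $S_0$ to conclude that $v$ cannot approach its supremum $L_2$ on any compact subset, then invoke Sard's theorem to pick a regular value $\lambda$ close enough to $L_2$. Your observation that the zeroth-order coefficient $c$ in \eqref{pde-2} vanishes (because the translator equation \eqref{eqn:translatorequationR3} has no explicit $u$-dependence and is thus invariant under $u\mapsto u+\mathrm{const}$) is correct and worth making explicit, since it guarantees the strong maximum principle and Hopf lemma apply without any sign condition.

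However, there are two genuine gaps. First, the sign-normalization step is both unnecessary and unjustified. You try to shrink $S_0$ until $v>0$ on $\sfint(S_0)$, claiming ``isolatedness of critical points ensures that this iteration terminates,'' but $S_0$ is unbounded and can contain infinitely many isolated critical points, so termination does not follow; you yourself flag this as ``the main obstacle.'' The paper simply sets $L_2:=\sup_{S_0}v$ and argues WLOG $L_2>0$ (since $v\not\equiv 0$), never requiring $v$ to have constant sign on $S_0$. The maximum-principle argument that rules out bounded components of $\{v>\lambda\}\cap S_0$ already handles any oscillation of sign, so the normalization buys nothing. Second, the compact set $K$ you use in the last step is wrong: you take $K:=\overline{B_R(p_a)}\cap\overline{S_0\cap\Omega_w'}$, but the complement of $\Omega^{\pm}_{p_a}(R)\cap\Omega_w'$ inside $S_0$ is not $B_R(p_a)\cap S_0$; points of $S_0$ at distance $>R$ from $p_a$ but lying on the ``wrong'' side of the circular arc $C_R(p_a)$ are outside $B_R(p_a)$ yet also outside $\Omega^{\pm}_{p_a}(R)$. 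The correct compact set is $K:=\bigl(\Omega^{\pm}_{p_a}(R)\cap\Omega_w'\bigr)^{\mathsf c}\cap\overline{S_0}$, which the paper verifies is compact because $S_0$ lies in the strip and extends to infinity only in a single horizontal direction. With this $K$ your conclusion $\sup_K v<L_2$ (which is exactly what the paper establishes via a nested-sequence/accumulation-point argument) does give $S_\lambda\subset\Omega^{\pm}_{p_a}(R)\cap\Omega_w'$ for any regular $\lambda\in(\sup_K v, L_2)$.
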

\begin{proof}
By Sard's theorem, a.e. $\lambda \in [L_1, L_2]$ is a regular value of $u'_1 - u_2$, so $(u'_1 - u_2)^{-1}(\lambda) \subset \Omega_w \cap \Omega'_w$ is a one-dimensional submanifold with at least one connected component contained in $S_0$.
The same argument as in Lemma \ref{lemma:xi_2neq0} shows that this must be a (collection of) proper open curve(s) diffeomorphic to $\bR$, hence having two ends extending to infinity in the $(\pm 1,0)-$direction.
Since the ends of the curve cannot intersect $\partial S_0$, they must extend in the same direction as $S_0$; these bound the desired semi-infinite region $S_{\lambda} \subset S_0$.

Since $u'_1 - u_2$ is non-constant, at least one of $|L_1|, |L_2| > 0$; by symmetry, we can assume that $L_2 > 0$ and consider an increasing sequence of regular values of $u'_1 - u_2$ with $\{ \lambda_i \} \to L_2$.
Iterating this argument for $(\lambda_i,\lambda_{i+1})$ in place of $(0,\lambda)$ gives a sequence of such regions $S_i := S_{\lambda_i}$ with $S_i \supset S_{i+1}$; clearly the boundaries $\partial S_i \cap \partial S_j = \varnothing$ for $i \neq j$.
We claim that one of these $\lambda_i$ has $S_i$ satisfying the desired property.
Otherwise, since already $S_i \subset \Omega_w \cap \Omega'_w$, suppose for contradiction that for all $i$ there is at least one $p_i \in (\Omega_{p_a}^{\pm}(R) \cap \Omega'_w)^C \cap S_i \neq \varnothing$.
The latter set is always contained in $( \Omega_{p_a}^{\pm}(R) \cap \Omega'_w )^C \cap S_0$, a compact region bounded between the lines $\{ y= c_i \}, \gamma_{1,2},$ and the circle $C_R(p_a)$.
By compactness, this infinite sequence $\{ p_i \}_{\lambda_i \to L_2}$ has an accumulation point $p_{\infty} \in \overline{S_0} \subset \sfint(\Omega_w \cap \Omega'_w)$. 
In fact, $p_{\infty} \not\in \partial S_0$ is an interior point of $S_0$, since $(u'_1 - u_2)(p_{\infty}) = L_2 > 0= (u'_1 - u_2)|_{\partial S_0}$.
Since $p_{\infty}$ is an interior point, the value $(u'_1 - u_2)(p_{\infty}) = \lim_{i \to \infty} (u'_1 - u_2)(p_i) = L_2$ is finite and $L_2 < + \infty$ is the finite maximum value of $u'_1 - u_2$, attained at $p_{\infty} \in \sfint(S_0)$, an interior point of the compact set $\Omega^{\pm}_{p_a}(R+1)^C \cap S_0$. 
This contradicts the maximum principle for $u'_1 - u_2$; thus, $S_{\lambda} \subset \Omega_{p_a}^{\pm}(R)$ for some $\lambda$.
\end{proof}
\begin{figure}[ht]
        \centering
        \includegraphics[scale = 0.6]{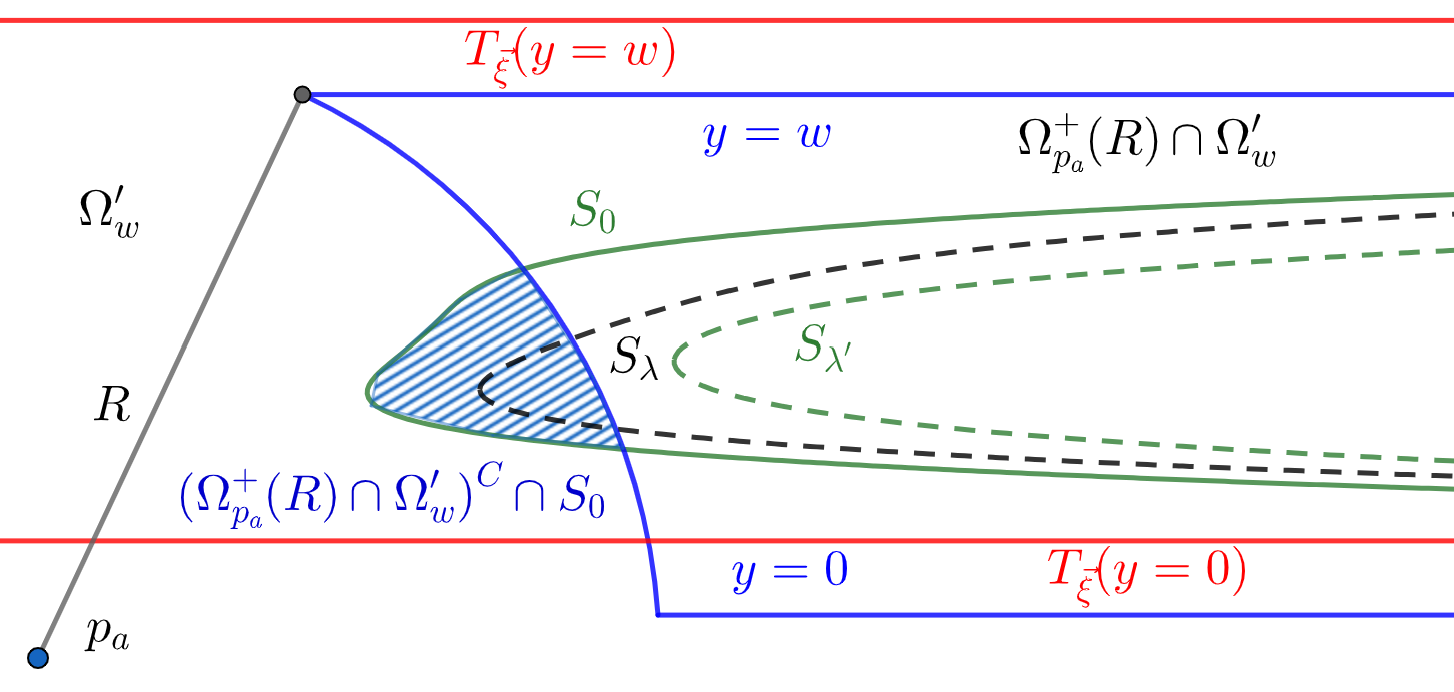}
        \caption{ \sffamily Evolution of the level set region.
        The strips $\Omega_w, \Omega'_w$ are between the blue and red lines respectively; $\Omega_{p_a}^+(R)$ is contained between the blue curves as before.
        The shaded region is the one used in the proof; the desired level set here is $S_{\lambda'}$.}
\end{figure}\label{fig:EvolutionLevelSets}
\begin{remark}
Note that we can pick the critical point $p_0$ to be contained in the region $\Omega^{\pm}_{p_a}(R) \cap \Omega'_w$ for any  given $p_a, R$: the slope bound \eqref{eqn:SlopeLowerBound} on $u$ gives an infinite region where $\left| \frac{\partial_y u_i}{\partial_x u_i} \right| > \ve$ for some $\ve$; by the diffeomorphism property of the Gauss map, this occurs on the complement of a compact set in $\Omega_w \cap \Omega'_w$ and is enough to ensure that the region $\Omega_{p_a}^{\pm}(R_i)$ is rotational for $u_i$ as long as $\delta(p_a, R_i) < \ve$ as in \eqref{eqn:paRBound}.
In the construction of $p_0, q_0, \vec{\xi}$ in \eqref{eqn:ConstructionCriticalPoint} via the Gauss map, we can choose $\tilde{R} = \max \{R_1, R_2\}$ so that $\Omega_{p_a}^{\pm}(\tilde{R}) \cap \Omega'_w$ is rotational for both $u'_1, u_2$, and take $p_0$ in it to guarantee that $q_0 = T_{\vec{\xi}}(p_0)$ remains in the rotational region. 
\end{remark}
Lemma \ref{Lemma:LevelSetEvolution} implies that we can always consider a translated level set (equivalently, a level set of $(M'_1 + \lambda \textbf{e}_3) \cap M_2$; this does not affect the critical point $p_0$) so that the region $S_{\lambda}$ is entirely contained in the rotational region $\Omega_{p_a}^{\pm}(R)$, within which the translator is a $\vartheta-$graph by Lemma \ref{lemma:PitchforkRotationalGraph}.
The only instance when this property is not accessible, requiring a different argument, is for a pitchfork with arcs extending in the $(+1,0)$ direction.
$M^{>R}$ is never a $\vartheta-$graph, because it is asymptotic to a grim reaper surface by \cite[Theorem 12.1]{HMW1}.
\begin{proposition}\label{prop:UniqueViaGrimReaper}
{In the above setting, if the two translators of the same width are pitchforks $\pP_1, \pP_2$, there exists a unique arc of the zero-level set passing through the critical point $p_0$ and going to infinity in the $(+1,0)$ direction.}
\end{proposition}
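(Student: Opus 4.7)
The plan is to replace the unavailable rotational maximum principle with an asymptotic comparison: both pitchforks of width $w$ are, by \cite[Theorem 12.1]{HMW1}, asymptotic as $x \to +\infty$ to a common \emph{grim reaper wing} which is unique up to vertical translation. Arguing by contradiction, suppose two distinct arcs $\gamma_1, \gamma_2$ of the zero-level set of $v := u'_1 - u_2$ emanate from $p_0$ and escape to infinity in the $(+1,0)$-direction; they then bound a semi-infinite region $S_0 \subset \Omega_w \cap \Omega'_w$ on which $v$ has a definite sign, say $v > 0$.

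The first step is to transfer this surface-level convergence into a function-level statement: for $i = 1, 2$ there are scalars $c_i$ with $u_i(x,y) - G_w(x,y) \to c_i$ uniformly on compact subsets of $(0,w)$ in $y$ as $x \to +\infty$, where $G_w$ is the fixed asymptotic grim reaper profile of width $w$. Taking $\xi_2 > 0$ without loss of generality, the common strip is $\bR \times (\xi_2, w)$, and substituting $u'_1(x,y) = u_1(x - \xi_1, y - \xi_2)$ yields, uniformly on compact $y$-subsets of $(\xi_2, w)$,
\[
v(x, y) \;\longrightarrow\; f(y) := G_w(0, y - \xi_2) - G_w(0, y) + C
\]
for an explicit constant $C$. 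Since the grim reaper cross-section in $y$ is strictly concave, $f'(y) = \partial_y G_w(0, y - \xi_2) - \partial_y G_w(0, y) > 0$; together with $f(\xi_2^+) = -\infty$ and $f(w^-) = +\infty$, $f$ has a unique zero $y^* \in (\xi_2, w)$. Because $v \equiv 0$ along each $\gamma_i$, one concludes $y(\gamma_i(t)) \to y^*$ as $x \to +\infty$, so $S_0 \cap \{x > R\}$ eventually lies in an arbitrarily thin strip around $y = y^*$ and $v \to 0$ uniformly on $S_0$ at infinity.

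On $S_0$, $v$ is then a bounded solution of the linear elliptic equation \eqref{pde-2} vanishing on $\partial S_0 = \{p_0\} \cup \gamma_1 \cup \gamma_2$ and decaying to $0$ at infinity; exhausting $S_0$ by $S_0 \cap \{x \leq R\}$ and applying the standard maximum principle to each compact piece, the boundary contributions at $x = R$ vanish as $R \to \infty$, forcing $\sup_{S_0} v \leq 0$ and contradicting $v > 0$. The hard part will be extracting from \cite{HMW1} the function-level asymptotic convergence with a \emph{single scalar} vertical shift, since the statement there is phrased via smooth convergence of translated surfaces up to vertical translation. Once this upgrade is secured, the strict monotonicity of $f$ (which genuinely pinches both arcs toward $y^*$ rather than letting them fan out) and the Phragm\'en--Lindel\"of-style maximum principle on the thinning unbounded domain $S_0$ become essentially routine.
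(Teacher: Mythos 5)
Your strategy is genuinely different from the paper's, and the gap you yourself flag is the real one. The paper does not attempt to upgrade the surface-level convergence to a statement of the form $u_i - g_w \to c_i$ for a fixed scalar $c_i$. Instead it translates both $\pP_2$ and $\pP'_1$ by $(p_n, u_i(p_n))$ along a sequence $p_n \in S_0$ with $x(p_n) \to +\infty$, invokes White's compactness theorem plus \cite[Theorem 12.1]{HMW1} to get subsequential limits that are tilted grim reapers $G_w$, $G'_w$ over $\Omega_w$ and $\Omega'_w$ respectively, and then cites \cite[Theorem 6.7]{HMW1}: two tilted grim reapers of the same slope over strips differing by $\xi_2 \neq 0$ cannot coincide and hence diverge from each other at infinity, so their intersection is a single curve and $S_0$ cannot persist. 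This completely sidesteps the need for a canonical vertical gauge.

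The step you flag as ``hard'' is not merely hard but is not a consequence of the cited result. Smooth $C^\infty_{\mathrm{loc}}$ convergence of $\pP - (p_n, u(p_n))$ to a grim reaper is a statement modulo a vertical translation that is re-chosen for each $n$; writing $\phi := u_2 - g_w$, what it yields (after unwinding the tilted grim reaper's linear $x$-dependence) is $\phi(x + x_n, y) - \phi(x_n, y_*) \to 0$ uniformly on compacta, not $\phi \to c$. The former does not imply the latter: a slowly oscillating drift such as $\sin(\log x)$ satisfies the incremental condition but has no limit. So without an additional decay estimate for pitchforks (e.g.\ exponential convergence to the grim reaper wing, which would have to be proved separately), your limit function $f(y)$ and the unique zero $y^*$ are not available, and with them go the pinching of $\gamma_1, \gamma_2$ toward $y^*$ and the Phragm\'en--Lindel\"of step. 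Granting the scalar shift, the rest of your argument is fine (the concavity computation for $g_w$ is correct, the linearized operator \eqref{pde-2} has no zeroth-order term here, and the exhaustion argument closes), but the missing asymptotic expansion is precisely the content the paper avoids needing by working with surfaces and citing \cite[Theorem 6.7]{HMW1} directly.
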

\begin{proof}
If there exist at least two such arcs, consider the semi-infinite region $S_0$ contained between them as in the above setup and take any sequence of points $\{ p_n \} \subset S_0$ with $x-$coordinates tending to $+\infty$. 

Due to White's compactness theorem, for any pitchfork $\pP$ constructed from a function $u$, the sequence of translators formed by
\[
\pP(n) := \pP - (p_n, u(p_n))
\]
has a smooth convergent subsequence in the uniform compact topology. 
For $x(p_n) \to + \infty$, by \cite[Theorem 12.1]{HMW1}, this limit is a tilted grim reaper $G_w$ over $\Omega_w$.
In particular, this is the case for $\pP_2$ and $\pP'_1$; these give rise to sequences $\pP_2(n)$ and $\pP'_1(n)$, with subsequences converging to $G_w$ and $G'_w$, respectively. 
Restricting to $\Omega_w$ and $\Omega'_w$, we get a graph $G_w$ coming from the tilted grim reaper function $g_w$,\footnote{Explicitly, this is given by $g_w(x,y) := x \sqrt{\left( \frac{w}{\pi} \right)^2 - 1} + \left( \frac{w}{\pi} \right)^2 \ln \left( \sin \left( \frac{\pi}{w} y \right) \right)$; however, we do not use this function here in any way besides its qualitative geometric properties.} while $G'_w$ comes from the graph of the translated $g'_w$ over $\Omega'_w$.
Since $\pP'_1, \pP_2$ have the same width $w$, they are asymptotic to grim reaper surfaces of the same slope in the $(+1,0)-$direction, defined over different strips $\Omega'_w, \Omega_w$.
We know from \cite[Theorem 6.7]{HMW1} that two (tilted) grim reapers diverge from each other at infinity, unless they meet at the origin, in which case they coincide. 
Since $\vec{\xi} \neq 0$ has $\xi_2 \neq 0$, the grim reapers do not coincide; hence $G_w \cap G'_w$ occurs along exactly one curve, so the region $S_0$ cannot exist and the arc through $p_0$ is unique.
\end{proof}
In the other cases, we follow the framework developed in Lemma \ref{Lemma:LevelSetEvolution} and Subsection \ref{subsect:RotatingTranslators}.
\begin{proposition}\label{prop:NumberOfArcsThroughp0}
In the setting of Lemma \ref{lemma:TypeOfArcs}, there is at most one arc of each type passing through $p_0$. 
\end{proposition}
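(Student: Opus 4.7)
Uniqueness of arcs of type (iii) and (iv) was already established in Lemma \ref{lemma:TypeOfArcs}, and uniqueness of the type-(i) arc in the pitchfork case is the content of Proposition \ref{prop:UniqueViaGrimReaper}. The remaining cases---pitchfork in the $(-1,0)$-direction, helicoid in either of the $(\pm 1,0)$-directions---are handled in a single sweep using the rotational machinery of Subsection \ref{subsect:RotatingTranslators}. The plan is to argue by contradiction: if two arcs of the same type emanate from $p_0$, they bound a semi-infinite region $S_0$ as described before Lemma \ref{Lemma:LevelSetEvolution}, and I will show that this region enables one to find two translator graphs that touch tangentially, contradicting the tangency principle \cite[Theorem 2.1]{MSHS15}.

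Assuming such $S_0$ exists, I would first choose $p_a$ and $R \gg 0$ so that, by Lemma \ref{lemma:PitchforkRotationalGraph}, the graphs of both $u'_1$ and $u_2$ over $\Omega_{p_a}^{\pm}(R)\cap\Omega'_w$ are $\vartheta$-graphs with axis through $p_a$. Lemma \ref{Lemma:LevelSetEvolution} then yields a level $\lambda$ for which the shrunken region $S_\lambda$ is fully contained in $\Omega_{p_a}^{\pm}(R)\cap \Omega'_w$; after vertically translating $\Sigma'_1$ by $-\lambda\mathbf{e}_3$ I may assume $\lambda=0$, so $u'_1\equiv u_2$ on the two boundary arcs $\gamma_1,\gamma_2$ of $S_0$. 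Lemma \ref{lemma:SimplyConnectedIntersection} guarantees that the intersection of the two graphs over $S_0$ is simply connected, so Lemma \ref{lemma:SameImage} gives a common image $W\subset\bR^+\times\bR$ under $\varphi_{p_a}$ and two $\vartheta$-graph representations $\vartheta_1,\vartheta_2:W\to(0,\alpha)$ with $\vartheta_1|_{\partial W}=\vartheta_2|_{\partial W}$.

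Next I would perform the rotational sweep. Because $R_{p_a}^{\theta}$ is an isometry of $g_{\text{\sffamily{trans}}}$, replacing $M'_1$ by $M'_{1,\theta}$ corresponds to the shift $\vartheta_1\mapsto\vartheta_1+\theta$, and $M'_{1,\theta}$ remains a translator. Since $\vartheta_1,\vartheta_2$ take values in a common sector of angle $\alpha<\pi$, for all sufficiently large $\theta$ we have $\vartheta_1+\theta>\vartheta_2$ on all of $W$. Define
\[
\theta^{*}:=\inf\bigl\{\theta\ge 0:\ \vartheta_1+\theta'\ge\vartheta_2\text{ on }W\text{ for all }\theta'\ge\theta\bigr\}.
\]
At $\theta=\theta^{*}$ the graphs $\vartheta_1+\theta^{*}$ and $\vartheta_2$ touch; if $\theta^{*}>0$, a boundary contact would force $\theta^{*}=0$ by the equality $\vartheta_1|_{\partial W}=\vartheta_2|_{\partial W}$, so the first contact is interior. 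Applying the tangency principle to the two translators $M'_{1,\theta^{*}}$ and $M_2$ at that interior one-sided tangent point, they must coincide on the connected component. But $\vec{\xi}\ne 0$ together with Remark \ref{rmk:xineq0} ensures that $\Sigma'_1$ and $\Sigma_2$ have distinct boundary data (along the sign-change lines in Definitions \ref{def:Pitchfork}, \ref{def:Helicoid}), so no rotation of $M'_1$ about a vertical axis can agree with $M_2$, a contradiction. This rules out $S_0$ and proves the statement for types (i) and (ii).

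The main obstacle will be ensuring that the infimum $\theta^{*}$ corresponds to an honest interior tangency rather than escaping to infinity along the non-compact region $W$. Handling this cleanly requires the properness of $\varphi_{p_a}|_M$ established in Lemma \ref{Lemma:GraphicalImmersion}, together with the asymptotic information of Lemma \ref{lemma:PitchforkRotationalGraph} and the fact that $S_\lambda\subset S_0$ is sandwiched between the two level arcs $\gamma_1,\gamma_2$ on which $\vartheta_1=\vartheta_2$. A second subtlety is verifying that once $M'_{1,\theta^{*}}\equiv M_2$ on one component, the boundary data genuinely disagree---this is where the hypothesis $\xi_2\ne 0$ from Lemma \ref{lemma:xi_2neq0}, and the resulting mismatch of boundary sign-change lines, is indispensable.
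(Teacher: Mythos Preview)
Your proposal follows essentially the same route as the paper's proof: reduce to the $(-1,0)$ pitchfork case and both helicoid directions, push the region into the $\vartheta$-graphical zone via Lemma~\ref{Lemma:LevelSetEvolution}, pass to azimuthal functions $\vartheta_1,\vartheta_2$ over a common $W$ via Lemma~\ref{lemma:SameImage}, and rotate until a one-sided interior tangency triggers \cite[Theorem 2.1]{MSHS15}. The obstacle you flag---that $\theta^*$ might not be attained on the non-compact $W$---is exactly what the paper isolates as Claim~\ref{claim:CompactIntersection}; there it is resolved by noting that $\tilde{S}$ has width at most $w$, so the azimuthal-angle oscillation over $\tilde{S}$ tends to $0$ as $\rho_{p_a}\to\infty$ or $|z|\to\infty$, whence $\vartheta_2-\vartheta_1\to 0$ away from compact subsets of $W$ and the extremum $\theta_0=\sup_W(\vartheta_2-\vartheta_1)$ (or $\inf$) is achieved at an interior point.
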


\begin{proof}
The uniqueness of arcs of type (iii) and (iv) is already covered by Lemma \ref{lemma:TypeOfArcs}, while arcs of type (i) in the  case of pitchforks are unique by  Proposition \ref{prop:UniqueViaGrimReaper}.

 We now focus on all other situations that coincide with the setting of Lemma \ref{lemma:PitchforkRotationalGraph}, so $M'_1, M_2$ are $\vartheta-$graphs over some region $\Omega_{p_a}^{\pm}(R) \cap \Omega'_w = (\Omega_w \cap \Omega'_w)_{p_a}^{\pm}(R)$ for $R \gg 0$. 
 Here $\Omega_{p_a}^{\pm}(R) $ is defined as in the proof for Lemma \ref{lemma:PitchforkRotationalGraph} and it is depicted in \hyperref[fig:NewDomain]{Figure 3}. 

Assume for contradiction that at least two arcs of the same type exist; then, a region $S_0$ can be formed between them as above. 
By Lemma \ref{Lemma:LevelSetEvolution}, for some $\lambda$ we can evolve this into the semi-infinite region $\tilde{S} := S_{ \lambda} \subset \Omega_{p_a}^{\pm}(R) \cap \Omega'_w$.
Let
\begin{equation}\label{eqn:TranslatorParts-S}
    \tilde{M}'_1 := \sfgr(u'_1 +  \lambda)|_{\tilde{S}} \subset M'_1 +  \lambda \, \textbf{e}_3, \qquad \tilde{M}_2 := \sfgr(u_2)|_{\tilde{S}} \subset M_2.
\end{equation}
For ease of notation, we may work with the translated function $u'^{\lambda}_1 := u'_1 +  \lambda$ (simplified to just $u'_1$) so that the boundary curve $\partial \tilde{S} \subset \{ u'_1 - u_2 = 0\}$ is part of the zero-level set. 
\begin{claim}\label{claim:CompactIntersection}
Let $\tilde{M}'_{1,\theta} := R^{\theta}_{p_a}(\tilde{M}'_1)$. 
For any $|\theta| > 0$, the intersection $\tilde{M}'_{1,\theta} \cap \tilde{M}_2 \subset \bR^3$ is bounded (possibly empty); for an appropriate sign and $0 < \pm \theta \ll 1$ sufficiently small, it is non-empty.
For some $\theta = \theta_0$, the surfaces become tangent (the tangent planes coincide) along a set of points interior to both and $\tilde{M}'_{1,\theta_0}$ lies (locally) on one side of $\tilde{M}_2$ at some point of contact.
\end{claim}
\begin{proof}[Proof of Claim \ref{claim:CompactIntersection}]
Note that we are within the rotational region due to $\tilde{S} \subset \Omega_{p_a}^{\pm}(R) \cap \Omega'_w$, so $\tilde{M}'_1, \tilde{M}_2$ are $\vartheta-$graphs by Lemma \ref{lemma:PitchforkRotationalGraph}. 
Their intersection is simply connected, by Lemma \ref{lemma:SimplyConnectedIntersection}, hence Lemma \ref{lemma:SameImage} applies (for $\Omega = \tilde{S}, v_1 = u'_1 +  \lambda, v_2 = u_2$) to show that they can be expressed as $\vartheta$-graphs of functions $\vartheta_1, \vartheta_2$ over a region $W$, with $W= \text{\sffamily{im}} \, \varphi_{p_a} (\tilde{M}'_1) = \text{\sffamily{im}} \, \varphi_{p_a}(\tilde{M}_2) \subset \bR^+_{\rho} \times \bR_z$.
Its boundary $\partial W$ satisfies
\[ 
\partial W = \text{\sffamily{im}} \, \varphi_{p_a}(\{(p,u_2(p)) \; : \; p \in \partial \tilde{S}\}), \qquad \vartheta_1|_{\partial W} = \vartheta_2|_{\partial W}.
\]
The functions $\vartheta_1, \vartheta_2$ over $W$ represent the azimuthal angle function $\theta_{p_a}$ on $\tilde{M}'_1, \tilde{M}_2$ in the cylindrical coordinate system centered at $p_a$, whereby $|\vartheta_i| \leqslant \frac{1}{2}$ for $\lambda, R, |p_a| \gg 0$.\footnote{ {Since $\Omega_{p_a}^{\pm}(R)$ is contained in a circular sector of angle $\alpha < \pi$ centered at $p_a$, the azimuthal angle $\theta_{p_a}$ is well-defined up to a constant; hence, this just says that the oscillation of $\theta_{p_a}$ on $\Omega_{p_a}^{\pm}(R)$ is $\leq 1$ for $R, |p_a| \gg 0$.}} 
The map $R_{p_a}^{\theta}$ on $\tilde{M}'_1$ translates $\vartheta_1 \mapsto \vartheta_1 + \theta$ vertically and preserves the $\vartheta-$graph property.  
We denote $\tilde{S}_{\theta}=R_{p_a}^{\theta}(\tilde{S})$, then the intersections of the $\vartheta-$graphs and the surfaces $\tilde{M}$ are related by
\begin{align*}
(\varphi_{p_a})^{-1}(\vartheta_2 - (\vartheta_1 + \theta))^{-1}(0) &= \tilde{M}'_{1,\theta} \cap \tilde{M}_2, \\
\pi_z (\tilde{M}'_{1,\theta} \cap \tilde{M}_2) &= 
(u'_{1,\theta} - u_2)^{-1}(-  \lambda)  \subset \overline{\tilde{S} \cap \tilde{S}_{\theta}},
\end{align*} 
where $\pi_z$ denotes the projection from $\bR^3$ to the $(x,y)-$plane.
The latter set contains $\partial \tilde{S}$ for $\theta = 0$, since $\vartheta_1$ and $\vartheta_2$ agree on $\partial W$.
Thus, for an appropriate sign and $0 < \pm \theta \ll 1$ sufficiently small, the graphs of the functions $\vartheta_2$ and $(\vartheta_1 + \theta)$ have non-empty intersection as $\vartheta$-graphs over $\sfint(W)$.
Then, taking preimages as above gives $\tilde{M}'_{1,\theta} \cap \tilde{M}_2 \neq \varnothing$ for those $\theta$.

By Lemma \ref{lemma:TypeOfArcs}, $\partial \tilde{S}$ does not intersect $\partial (\Omega_w \cap \Omega'_w)$, hence $\overline{\tilde{S} \cap \tilde{S}_{\theta}} \Subset \Omega_w \cap \Omega'_{w,\theta}$ for these $\theta$; it is compact, since $\overline{\Omega_w \cap \Omega'_{w,\theta}}$ is also compact, and $u'_{1,\theta}, u_2$ are finite, hence continuous there.
It follows that $\overline{u_2^{-1}(\tilde{M}'_{1,\theta} \cap \tilde{M}_2})$ is compact, so $\tilde{M}'_{1,\theta} \cap \tilde{M}_2$ is bounded, as it has compact closure.

Depending on the choice of sign $+$ or $-$ for $\theta$ above, the desired angle $\theta_0$ is now $\sup_W (\vartheta_2 - \vartheta_1) > 0$ or $\inf_W(\vartheta_2 - \vartheta_1) < 0$, respectively;\footnote{One of these must be non-zero, else $\vartheta_1 = \vartheta_2$ would mean that $\tilde{M}'_1 = \tilde{M}_2$, whereby $u'_1 = u_2$ on $\tilde{S}$ would lead to the same contradiction obtained in the conclusion of this argument.} it remains to prove that this is attained on $W$.
Note that for any $\delta > 0$, the set $\bigcup_{\theta \in [\pm \delta,\pm \theta_0]} \tilde{S} \cap \tilde{S}_{\theta}$ is bounded; the above argument extends to show that $\bigcup_{\theta \in [\pm \delta, \pm \theta_0]} (\tilde{M}'_{1,\theta} \cap \tilde{M}_2)$ is  also bounded, thus, so is its image in $W$ under $\varphi_{p_a}$.
Conversely, $\tilde{S}$ having finite width $\leqslant w$ implies $\vartheta_2 - \vartheta_1 \to 0$ away from compact subsets,\footnote{This is clear to see by taking $p_a \in \sfint(\Omega_w \cap \Omega'_w)$ with $|p_a| \gg 0$, but also holds generally.} i.e.,
\[
\sup_{\substack{q_i \in \tilde{S}, \\ \rho_{p_a}(q_i) \geqslant r}} |\theta_{p_a}(q_1) - \theta_{p_a}(q_2)| \to 0, \quad \sup_{\substack{q_i \in \tilde{S}, \\ u(q_i) \geqslant r}} |\theta_{p_a}(q_1) - \theta_{p_a}(q_2)| \to 0, \qquad \text{as } \; r \to + \infty.
\]
so $\vartheta_2 - \vartheta_1 \to 0$ as $|\rho_{p_a}| \to + \infty$ or $|z| \to + \infty$.
This means that $\vartheta_2 - \vartheta_1$ attains the respective extremum $\theta_0$ in $\sfint(W)$. 
At (the preimages of) those points in $\tilde{M}'_{1,\theta_0} \cap \tilde{M}_2$, the gradients of the two surfaces align and they are (locally) the last points of contact, hence interior points of tangency in the manner prescribed above. 
This finishes the proof of Claim \ref{claim:CompactIntersection}.
\end{proof}
\noindent
Applying \cite[Theorem 2.1]{MSHS15} to the tangency of $\tilde{M}'_{1,\theta_0}, \tilde{M}_2$ shows that they coincide, meaning $u'_{1,\theta_0} + \lambda = u_2$ on $\tilde{S} \cap \tilde{S}_{\theta_0}$, thus on all of $\Omega_w \cap \Omega'_{w,\theta_0}$ by the weak continuation principle.
However, this is impossible as $u'_{1,\theta_0}$ is finite on the line of $\partial \Omega_w$ contained in $\Omega_w \cap \Omega'_{w,\theta_0}$, while $u_2 \to \pm \infty$.
Thus, a region $\tilde{S}$ as above cannot exist, so the arcs (i), (ii) are unique.
\end{proof}
\begin{figure}[ht]\label{rotation}
{\includegraphics[width=15cm]{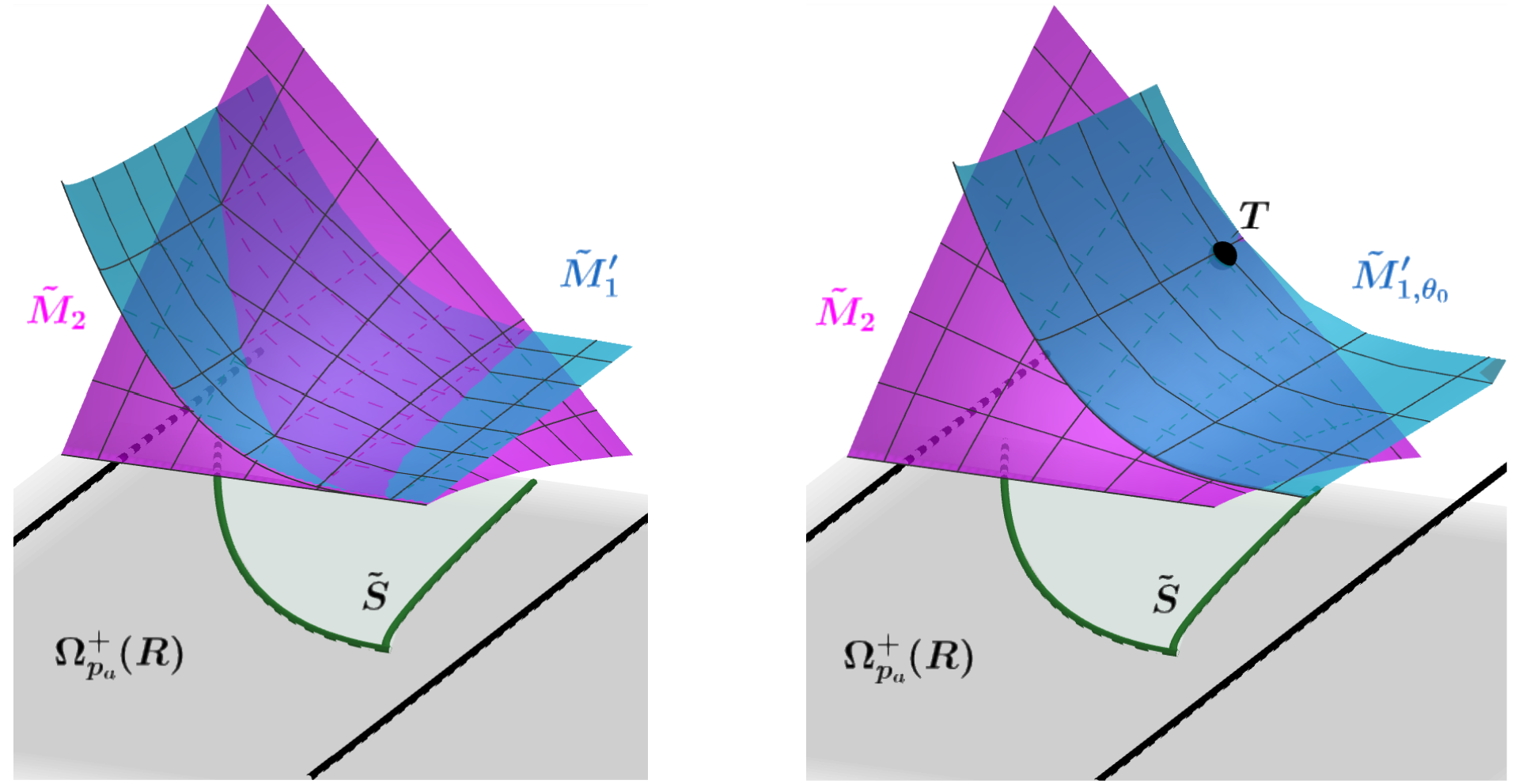}}
\caption{\small \sffamily Depiction of the process of rotating the surface $\tilde{M}'_1$ over the region $\tilde{S} \subset \Omega^+_{p_a}(R)$ (shaded), from the initial configuration (left) to the one obtained by rotating $\tilde{M}'_1$ through an angle $\theta_0$ (right) to produce the desired interior tangency with $\tilde{M}_2$ at $T$.}
\end{figure}

\subsection{Uniqueness of semigraphical translators}

Recall our setup: we consider two distinct semigraphical  solutions $u_1,\, u_2$ that are either both pitchforks or both  helicoids there is a critical point $p_0$ of $u_1(\cdot - \vec{\xi} \, )-u_2(\cdot)$ at which we assume (without loss of generality) that
$u'_1(p_0)=u_1 (p_0 - \vec{\xi} \, )=u_2(p_0)$. By
 \cite[Proposition 4.2 and Theorem 7.1]{HMW1}, the level set $\{ u'_1 - u_2 = 0\}$ consists of collections of an even number of arcs $(\geqslant 4)$ that pass through the critical point $p_0$ and suitable pairs of arcs assemble into analytic curves. 
They come in types (i)-(iv) by Lemma \ref{lemma:TypeOfArcs}; and Propositions \ref{prop:UniqueViaGrimReaper} and \ref{prop:NumberOfArcsThroughp0} show that the curves of each type are unique. 

We now prove uniqueness in each case assuming our setup and the notation introduced in previous sections.

\begin{theorem}[Uniqueness of pitchforks]\label{thm:UniquenessPitchforks}
Let $w \geq \pi$.
Then, there exists a unique (up to vertical translation) pitchfork translator $\pP_w \subset \bR^3$ of width $w$.
\end{theorem}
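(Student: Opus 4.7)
The plan is to derive a contradiction by counting the arcs of the zero-level set of $u'_1 - u_2$ through the critical point $p_0$, combining the type classification of Lemma \ref{lemma:TypeOfArcs} with the uniqueness-per-type results already proved. Suppose for contradiction that two pitchforks of the given width $w \geq \pi$, with fundamental-piece functions $u_1, u_2 : \Omega_w \to \bR$, do not differ by a vertical translation, so $u_1 - u_2$ is non-constant. By Lemma \ref{lemma:xi_2neq0}, I may then select $p_0 \in \Omega_w$ and a shift vector $\vec{\xi}$ with $\xi_2 \neq 0$ such that $p_0$ is an isolated critical point of $u'_1 - u_2$, where $u'_1(p) := u_1(p - \vec{\xi})$. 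Vertically translating one of the two pitchforks further allows me to assume $u'_1(p_0) = u_2(p_0)$, so that $p_0$ lies on the zero-level set.

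Next, I would invoke the Morse--Rad\'o arc structure: the difference $v := u'_1 - u_2$ satisfies the linear elliptic equation \eqref{pde-2}, vanishes to first order at $p_0$, and is non-constant, so \cite[Proposition 4.2, Theorem 7.1]{HMW1} gives that the zero-level set through $p_0$ consists of an even number, at least four, of analytic arcs meeting at $p_0$. By Lemma \ref{lemma:TypeOfArcs}, in the pitchfork setting every such arc is of type (i), (ii), or (iii).

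The counting step is then immediate from results already in place. Lemma \ref{lemma:TypeOfArcs} itself produces precisely one arc of type (iii); Proposition \ref{prop:UniqueViaGrimReaper}, via the grim-reaper comparison at $+\infty$, forces at most one arc of type (i); and Proposition \ref{prop:NumberOfArcsThroughp0}, via the rotational maximum principle on the $\vartheta$-graphical end $M^{<-R}$, forces at most one arc of type (ii). Summing, at most three arcs pass through $p_0$, contradicting the Morse--Rad\'o lower bound of four. Hence $u_1 - u_2$ must be constant, which means the two fundamental pieces, and therefore the two full pitchforks (since vertical translation commutes with the Schwarz reflection about the $z$-axis), differ only by a vertical translation.

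The heavy lifting for this theorem is entirely front-loaded into the preceding sections: the Morse--Rad\'o arc structure, the grim-reaper comparison for type (i) (needed precisely because $M^{>R}$ fails to be a $\vartheta$-graph for a pitchfork), and the rotational maximum principle for type (ii) together with its $\vartheta$-graph bookkeeping. Once these are available the theorem itself is the clean arithmetic statement $\#\{\text{arcs through } p_0\} \leq 3 < 4 \leq \#\{\text{arcs through } p_0\}$, and I do not foresee any further obstacle beyond the setup above.
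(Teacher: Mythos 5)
Your proposal is correct and follows essentially the same route as the paper: reduce to a critical point $p_0$ of $u'_1 - u_2$, invoke the Morse--Rad\'o lower bound of at least four arcs, and contradict it by showing each of the three possible arc types (i)--(iii) is represented at most once, via Lemma \ref{lemma:TypeOfArcs}, Proposition \ref{prop:UniqueViaGrimReaper}, and Proposition \ref{prop:NumberOfArcsThroughp0}. The only cosmetic difference is that you attribute the per-type uniqueness to the three separate results explicitly, whereas the paper's brief proof text refers primarily to Lemma \ref{lemma:TypeOfArcs} (with Propositions \ref{prop:UniqueViaGrimReaper} and \ref{prop:NumberOfArcsThroughp0} having been recalled just before the theorem statement); the underlying argument is identical.
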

\begin{proof}
From Lemma \ref{lemma:TypeOfArcs} there is at most one arc of each type (i)-(iii) starting at $p_0$, hence there are at most 3 arcs. As previously stated, \cite[Proposition 4.2]{HMW1} implies that the level set $\{ u'_1 - u_2 = 0\}$ consists of collections of an even number of arcs that is at least $4$, which is a contradiction. 

We conclude that
$p_0$ cannot exist, which implies that $u_1 - u_2$ is constant and $\pP_w$ is unique up to translation.
\end{proof}
\begin{remark}[When $w = \pi$]
Recall from \cite[Theorem 10.1]{HMW1} that for $w=\pi$, the pitchfork $\pP_{\pi}$ arises from a possibly smaller connected component of the graph of the solution $u_{\pi}$, over a sub-domain 
$\tilde{\Omega} \subset \Omega_{\pi}$. 
This $\tilde{\Omega}$ still extends infinitely in both directions and has width $\pi$,\footnote{The \textit{width} of a domain of interest can be defined as $\sup_{p,q \in \Omega} |y(p) - y(q)|$.} so our considerations still apply: the translation of the domains by $\vec{\xi}$ does not produce a larger set of common vertices in $\Omega_{\pi} \cap \Omega'_{\pi}$ than in the general case $w > \pi$, hence the counting and uniqueness arguments for the arcs go through. 
Following the last step of the above argument, that $u'_{1,\theta_0} + \lambda = u_2$ on $\tilde{\Omega} \cap \tilde{\Omega}_{\theta_0}$, would again imply the equality on all of $\Omega_{\pi} \cap \Omega_{\pi,\theta_0}'$ by the uniqueness of weak continuation, which gives a contradiction as above.
\end{remark}

\noindent 
For helicoids, we will obtain a contradiction by setting up a configuration that produces two arcs extending in the same infinite direction.
We do this by comparing an arbitrary potential helicoid of width $w$ with the one arising as the translator limit of Scherk's type  translators.
\begin{theorem}[Uniqueness of helicoids]
Let $w < \pi$.
Then, there exists a unique (up to vertical translation) helicoid $\pH_w \subset \bR^3$ of width $w$.
\end{theorem}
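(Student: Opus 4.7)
The plan is to combine the arc-counting machinery of Theorem~\ref{thm:UniquenessPitchforks} with a direct comparison against the canonical Scherk-limit helicoid $\pH_w^*$ constructed in \cite[Theorem~10.1]{HMW1}. The obstacle, relative to the pitchfork case, is that for helicoids Lemma~\ref{lemma:TypeOfArcs} permits four arc types (i)--(iv) through $p_0$, while the lower bound of \cite[Proposition~4.2]{HMW1} only requires an even number $\geq 4$; hence ``at most one arc per type'' from Proposition~\ref{prop:NumberOfArcsThroughp0} gives exactly four arcs and no immediate contradiction.

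Supposing for contradiction that an arbitrary helicoid $\Sigma_1$ of width $w$ is not a vertical translate of $\Sigma_2 := \pH_w^*$, the setup of Subsection~\ref{subsec:setup} together with Lemma~\ref{lemma:xi_2neq0} produces a critical point $p_0 \in \Omega_w \cap \Omega_w'$ of $u_1' - u_2$ and a shift $\vec{\xi}$ with $\xi_2 \neq 0$; without loss of generality $u_1'(p_0) = u_2(p_0)$. Combining Lemma~\ref{lemma:TypeOfArcs}, Proposition~\ref{prop:NumberOfArcsThroughp0}, and \cite[Proposition~4.2]{HMW1}, the zero-level set through $p_0$ then consists of exactly four arcs, one of each type (i)--(iv).

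To obtain the desired contradiction, the plan is to exploit additional symmetries inherited by $\pH_w^*$ from the approximating Scherks (for instance the $180^\circ$ rotation $\sigma$ about the axial line $L_0=Z$, and the screw-translation $T: L_n \mapsto L_{n+1}$; these are isometries of \eqref{eqn:translatorequationR3} that fix $\Sigma_2$ but in general do not fix $\Sigma_1$). Applying such a symmetry to the translated surface $\Sigma_1'$ produces a second comparison $(\sigma(\Sigma_1'), \Sigma_2)$ with its own critical-point structure at $\sigma(p_0)$. Since $\sigma$ interchanges the $(\pm 1, 0)$ horizontal directions, an arc of type (i) in the original configuration is carried to one of type (ii) in the rotated configuration. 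The goal is to paste the two critical-point pictures together---either at a common critical point or at a pair of $\sigma$-related ones---and to argue that the resulting level set of the appropriate difference function must contain two distinct arcs going to infinity in the same $(\pm 1, 0)$-direction. This violates Proposition~\ref{prop:NumberOfArcsThroughp0}, so $\Sigma_1$ must be a vertical translate of $\pH_w^*$.

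The main obstacle will be rigorously carrying out the pasting step: verifying that the ``new'' arc produced by comparing with $\sigma(\Sigma_1')$ is genuinely distinct from the original arcs of types (i) or (ii) through $p_0$, rather than merely the $\sigma$-image of one of them. For this, the $\vartheta$-graph property of Lemma~\ref{lemma:PitchforkRotationalGraph}---which holds for helicoids in both horizontal directions, unlike the pitchfork case---together with the rotational maximum principle developed in the proof of Proposition~\ref{prop:NumberOfArcsThroughp0}, should be the central tools. As a secondary branch, in the degenerate case $\sigma(\Sigma_1') = \Sigma_1'$, one would conclude directly that $\Sigma_1$ shares the $\sigma$-symmetry of $\pH_w^*$ and then apply the tangency principle \cite[Theorem~2.1]{MSHS15} to the matched Gauss maps on the common image $\pR_w$ to force $\Sigma_1 = \Sigma_2$ up to vertical translation.
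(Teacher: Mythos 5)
Your setup and diagnosis of the obstacle (that Proposition~\ref{prop:NumberOfArcsThroughp0} combined with the four arc types of Lemma~\ref{lemma:TypeOfArcs} gives exactly four arcs, so the pitchfork-style parity contradiction is unavailable) agree with the paper, as does your choice to compare $\Sigma_1$ against the canonical Scherk-limit helicoid. But the symmetry idea on which you hinge the rest of the argument does not supply new information, and I think this gap is fatal as stated.

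The $180^\circ$ rotation $\sigma$ about the $z$-axis fixes \emph{both} $\Sigma_2 = \pH_w^*$ \emph{and} $\Sigma_1$, since every helicoid is by definition generated by Schwarz reflection about the $z$-axis (Definition~\ref{def:Helicoid}), so $\sigma$ is automatically one of its symmetries. Consequently $\sigma(\Sigma_1') = \sigma(\Sigma_1 + \vec{\xi}\,) = \Sigma_1 - \vec{\xi}$, and the ``new'' comparison $(\sigma(\Sigma_1'), \Sigma_2)$ is nothing but the $\sigma$-image of the original configuration $(\Sigma_1', \Sigma_2)$, because $\sigma$ is a global isometry fixing $\Sigma_2$. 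Its critical point is $\sigma(p_0)$, its level set is $\sigma(\{u_1' - u_2 = 0\})$, and its four arcs are precisely the $\sigma$-images of the original four, with types (i) and (ii) swapped as you observe. There is no second ``picture'' to paste: the two configurations are related by a rigid motion and do not coexist on a common domain. The worry you flag yourself (``rather than merely the $\sigma$-image of one of them'') is exactly what goes wrong, and neither the $\vartheta$-graph property nor the rotational maximum principle repairs it, since those tools prove the arc uniqueness that is already taken as input and cannot manufacture an extra arc. The degenerate branch is also moot: since $|\xi_2| < w$, the shift $2\vec{\xi}$ is never a period of $\Sigma_1$, so $\sigma(\Sigma_1') \neq \Sigma_1'$ always; and in any case ``$\Sigma_1$ shares the $\sigma$-symmetry of $\pH_w^*$'' is automatic, not a conclusion one reaches.

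The paper resolves the difficulty by a different mechanism: it passes to the Scherk approximants $\cD_i = \sfgr(\tilde u_i : \cP(\alpha_i,w_i,L_i) \to \bR)$ that converge to $M_2$, and applies the lower semicontinuity of the critical-point count $\mathsf{N}$ from Morse--Rad\'o theory (\cite[Theorem~39, Corollary~40]{morserado}) to obtain, for $i\gg0$, a critical point $p_i$ of $u_1' - \tilde u_i$ converging to $p_0$. On the \emph{finite} parallelogram the vertex set $V_i$ has exactly four points, and the paper shows exactly one arc emanates from each, forcing $\mathsf{N}(F_i\,|\,M_1')=1$ and hence exactly four arcs $(\gamma_1)_i, \dots, (\gamma_4)_i$. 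The key payoff is that, in the limit $i\to\infty$, two of these four vertices $((P_1)_i$ and $(P_3)_i$ in the $\xi_2 > 0$ configuration$)$ escape to $x\to -\infty$, so two of the four limiting arcs go to infinity in the \emph{same} direction, contradicting Proposition~\ref{prop:NumberOfArcsThroughp0}. In effect, the finite Scherk approximation breaks the symmetry that defeats the direct attack, by temporarily replacing two of the semi-infinite arc types with arcs anchored at distinct finite vertices that later collapse onto the same end. This is the missing ingredient your proposal needs.
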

\begin{proof}
    We will specifically prove that if $M_1=\sfgr(u_1)$ is an arbitrary helicoid of width $w$ and $M_2=\sfgr(u_2)$ is the helicoid of width $w$ constructed in \cite[Theorem 12.1]{HMW1}, then $\pH_1 = \pH_2$ up to vertical translation.
    The fundamental piece $M_2$ of $\pH_2$ is constructed as follows: for any $\alpha \in (0,\pi)$ and $w \in (0,\infty)$, there is a unique $L = L(\alpha,w)$ such that the boundary value problem for a translator $\cD = \sfgr( \tilde{u} : \cP(\alpha,w,L) \to \bR)$ has a (unique) solution $\tilde{u}$ taking values $\pm \infty$ on the alternating sides of the parallelogram $\cP(\alpha,w,L)$ of base angle $\alpha$, width (height) $w$, and horizontal base length $L$ with lower-left vertex at the origin. 
    By \cite[Theorem 9.7]{HMW1}, we obtain $M_2$ and its function $u_2$ as the smooth translator limit of the $\cD_i$ for a sequence of $\{ \alpha_i \} \to \pi$ and $\{ w_i \} \to w < \pi$, with $\{ \tilde{u}_i \} \to u_2$ smoothly, $\{ L_i\} \to + \infty$, and with smooth dependence of $L, \cD$ on $(\alpha,w)$.
    Also, the upper-right vertex of the $\cP(\alpha_i,w_i,L_i)$ converges to $(\hat{x}_2,w)$ for $\hat{x}_2(u_2)$ computed in \cite[Proposition 11.3]{HMW1}.

Hence, we proceed again by contradiction. We assume that $u_1-u_2$ is not constant and then we are able to find $p_0$ and $\vec \xi$ as in Subsection \ref{subsec:setup}, such that defining $u_1'(p):=u_1(p-\vec \xi)$ makes $p_0$ a critical point of $u'_1 - u_2$.
By Proposition \ref{prop:NumberOfArcsThroughp0}, the level set $\{ u'_1 - u_2 = 0\}$ of $M'_1 \cap M_2$ contains at most one arc of each type (i) - (iv) going through $p_0$; since we need at least $4$ due to $p_0$ being a critical point, there is exactly one arc of each type. 

Let us consider the functions
\[
F(x,y,z) := z - u_2(x,y), \qquad F_i := z - \tilde{u}_i(x,y).
\]
From our construction of $M_2$, we have that $F_i \to F$ uniformly.
Furthermore, we are assuming that the number of critical points (counting multiplicity) is $\geq 1$, due to $p_0$:
    \begin{equation} \label{eq:N1} \mathsf{N}(F_2 \,| \, M_1') \geq 1, \end{equation}
   where $\mathsf{N} ( F \, | \, M)$ denotes the number of critical points of a function $F$ on $M$.
     By the lower semicontinuity of $\mathsf{N}(\cdot)$, proved in \cite[Theorem 39, Corollary 40]{morserado}, we have that
     \begin{equation} \label{eq:Ni}         
     \liminf_i \mathsf{N}(F_i \, | \, M_1') \geq \mathsf{N}(F_2 \, | \, M_1') \geq 1. \end{equation} 
     Hence, the critical point $p_0$ of $u'_1 - u_2$ arises as the limit (after possibly passing to a subsequence) of critical points $p_i$ of the functions $u'_1 - \tilde{u}_i$; the level set curves through $p_0$ are likewise the $C^{\infty}$ limits of the level set curves through the $p_i$.
    For each $i$, the set $V_i$ of vertices from \cite[Proposition 4.2]{HMW1}\footnote{In the paper \cite{HMW1}, this set is denoted by $S$ for the intersection of the two domains.} for the domain $\Omega'_w \cap \cP(\alpha_i, w_i,L_i)$ consists of interior vertices and points $P_{\ell}$ where the $(+\infty, + \infty)$ or $(-\infty, -\infty)$ edges of the two domains intersect.
    We can focus on $i \gg 0$ sufficiently large above, so that:
    \begin{enumerate}[(i)]
        \item all $|w - w_i| < |\xi_2|$, since $\xi_2 \neq 0$; this ensures that the configuration of the set of vertices will be the one described below, and the set $V_i$ does not become smaller.\footnote{This would happen if we tried to reproduce the argument for two helicoids of different widths $w \neq w'$; these are known to be distinct, hence the argument would fail.} 
        \item all $\alpha_i \in (\pi - \delta, \pi)$ for some $\delta = \delta(w,\xi)$; this ensures that one of the two vertices $(0,0) + \vec{\xi}$ or $(\hat{x}_1,w) + \vec{\xi}$ of $\Omega'_w$ is contained in $\cP(\alpha_i,w_i,L_i)$, depending on $\text{\sffamily{sgn}}(\xi_2) \neq 0$.
    \end{enumerate} 
    Since $p_i$ is a critical point of $u'_1 - \tilde{u}_i$, there is an even number number $2r_i \geqslant 4$ of arcs joining it to the points $ \{ (P_{\ell})_i \}_{\ell=1}^4$ of $V_i$; these assemble, in pairs, to analytic curves through $p_i$.
    The index $i$ in $(-)_i$ comes from the $\cP_i(\alpha_i,w_i, L_i)$ and will be suppressed when clear, to ease notation.
    These are the only arc types appearing through $p_i$, as arcs cannot have interior accumulation points or self-intersection, as showed in Lemma \ref{lemma:TypeOfArcs}, and can only have boundary endpoints in $V_i$, because $u'_1 - \tilde{u}_i \to \pm \infty$ along the rest of $\partial \left(\Omega_w \cap \cP(\alpha_i,w_i,L_i)\right)$.

    Let $\{ (\gamma_{\ell})_i \}_{\ell=1}^4$ be the different types of arcs connecting $p_i$ to the points $\{ (P_{\ell})_i \}_{\ell = 1}^4$, depicted in the configurations of \hyperref[fig:HelicoidConfigs]{Figure 6}.
    Depending on their type, they each converge to some arc of type (i)-(iv) from Lemma \ref{lemma:TypeOfArcs} in the limit.\footnote{In what follows, we will use the phrase ``in the limit'' as shorthand for ``in the limiting configuration on $M'_1 \cap M_2$ coming from $M'_1 \cap \cP(\alpha_i,w_i,L_i)$ as $i \to \infty$.''}
    
    \begin{figure}[ht]
        \centering
        \includegraphics[scale = 0.32]{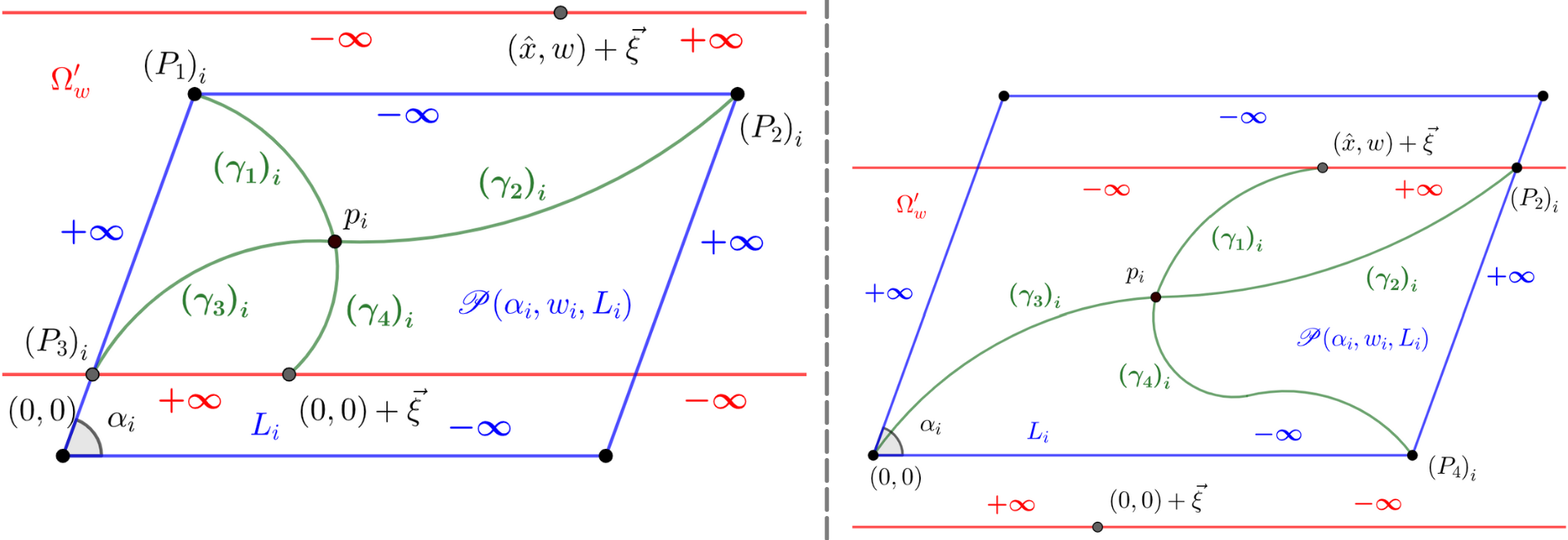}
        \caption{ \sffamily The two possible relative configurations and the resulting arcs, depending on $\xi_2 > 0$ (left) or $\xi_2 < 0$ (right).
        Here $\alpha_i < \pi/2$ for the convenience of the reader.}
        \end{figure}\label{fig:HelicoidConfigs}

    We focus without loss of generality on the first configuration \textbf{(a)}, where $\xi_2 > 0$; the arguments for the second configuration \textbf{(b)}, where $\xi_2 <0$, are analogous upon changing $(-1,0)-$arcs to $(+1,0)-$arcs (type (ii) to type (i)), and $\{ (0,0) + \vec{\xi}, (\hat{x},w) \}$ to $\{ (0,0), (\hat{x},w) + \vec{\xi} \}$.

We know, from \eqref{eq:Ni}, that 
\begin{equation}\label{eq:>4}\mbox{$\{u_1'-\tilde{u}_i=0\}$ consists of at least $4$ arcs meeting at $p_i$.} 
\end{equation}
Since each $\{u_1'- \tilde{u}_i=0\}$ is simply connected and the $\{ M'_1, \widetilde{M}_i \}$ are proper, these arcs can only end at a point in $V_i = \{ (P_1)_i, (P_2)_i, (P_3)_i, \vec \xi \}.$ 
It is clear that the arcs through $\{ P_1, P_2, \vec{\xi} \}$ exist and are unique, because one of $u_1$ or $\tilde{u}_i$ is finite in a neighborhood of these points and the maximum principle applies. 
The arc through $P_3$ is also unique: $\widetilde M_i $ converges, as $z \to \infty$, to two parallel vertical strips; the one appearing near $P_3$ is 
\[
s=\{ y= \tan (\alpha_i) x, \; 0 \leq y \leq w \}.
\]
    On the other hand, $M_1'$ converges, as $z\to \infty$, to the half-plane \[h=\{y=\xi_2, \; x \leq \xi_1\}.\]
    As $s \cap h=\{P_3\} \times \mathbb{R}$, $\{u_1'-\widetilde u_i=0\} $ in a neighborhood of $P_3$ contains only one open arc (ending at $P_3$) that corresponds to the projection of a curve in $M_1'\cap \widetilde M_i$ asymptotic to the line $\{P_3\} \times \mathbb{R}.$
    It follows that
\begin{equation} \label{eq:<4}
    \mbox{$\{u_1'-\tilde{u}_i=0\}$ consists of at most $4$ arcs meeting at $p_i$.}
\end{equation}
    Together, the expressions \eqref{eq:>4} and \eqref{eq:<4} imply that there exactly four arcs in $\{u_1'-\tilde{u}_i=0\}$ meeting at $p_i$.
In the notation of Figure \ref{fig:HelicoidConfigs}, we obtain:
    \[ 
    \{u_1'-\tilde{u}_i=0\}=(\gamma_1)_i \cup (\gamma_2)_i \cup (\gamma_3)_i \cup (\gamma_4)_i.
    \]
    Then $\mathsf{N}(F_i \, | \, M_1')=1$, hence \eqref{eq:N1} and \eqref{eq:Ni} give 
    $\mathsf{N}(F_2 \, | \, M_1')=1$. 
    This means that $\{ u_1'-u_2=0\}$ is the union of exactly 4 curves, which are limits of the curves $(\gamma_1)_i, \ldots, (\gamma_4)_i$. 
    Observe from the configuration of the vertices that $(P_1)_i \to (-\infty, w)$ and $(P_3)_i \to (-\infty, \xi_2)$, meaning that two of these arcs, the limits of $(\gamma_1)_i$ and $\gamma_3)_i$, are  going to infinity in the $(-1,0)-$direction; this contradicts Proposition \ref{prop:NumberOfArcsThroughp0}.

Thus, such a critical point $p_0$ cannot exist, so $u_1 - u_2$ is constant.
\end{proof}

\begin{remark}
In the configuration \textbf{(b)} (on the right) with $\xi_2 < 0$, we would instead have obtained the contradiction via the above argument by noticing that the curves of types $(\gamma_2)_i, (\gamma_4)_i$ converge to two distinct arcs of type (i) through $p_0$ in the limit, which again contradicts Proposition \ref{prop:NumberOfArcsThroughp0}.     \end{remark}

\section{Applications}

The uniqueness of pitchforks and helicoids allows us to complete previous results and questions of Hoffman-Mart\'in-White \cites{HMW1, HMW2} and Gama-Mart\'in-M\o ller \cite{GMM} in a satisfying manner. 
We stress that they treat the main steps of these theorems, establishing the compactness of certain sequences of translators; our contribution is the existence and uniqueness of the resulting limits in the strong sense (rather than subsequential). 

To state the first few results, recall the construction of \textit{translating tridents} due to Nguyen, \cite{nguyen}; see also \cite[Theorem 1]{HMW2}.
\begin{definition}
For every $a>0$, let the \textit{trident} $M_a$ \textit{of neck size $a$} be the unique properly embedded semigraphical translator in $\bR^3$ that
\begin{enumerate}[(i)]
    \item is periodic with period $(2a,0,0)$,
    \item contains the vertical line $\{ (na,0) \} \times \bR$, for each integer $n \in \bZ$,
    \item has fundamental piece (for Schwarz reflection) the graph of the solution $u_a$ of \eqref{eqn:translatorequationR3} over a strip $\bR \times (0,b)$, for a unique $b = b(a)$, with boundary data 
    \[
    u_a(x,0) = \begin{cases}
        - \infty, & - a < x < 0, \\
        + \infty, & 0 < x < a
    \end{cases}, \qquad u_a(x,b) = - \infty.
    \]
    \item is tangent to the $(y,z)-$plane at the origin.
\end{enumerate}   
\end{definition}
Two examples of tridents of different neck sizes are given in \hyperref[tridents]{Figure 7}.
\begin{figure}[ht]\label{tridents}
{\includegraphics[width=7cm]{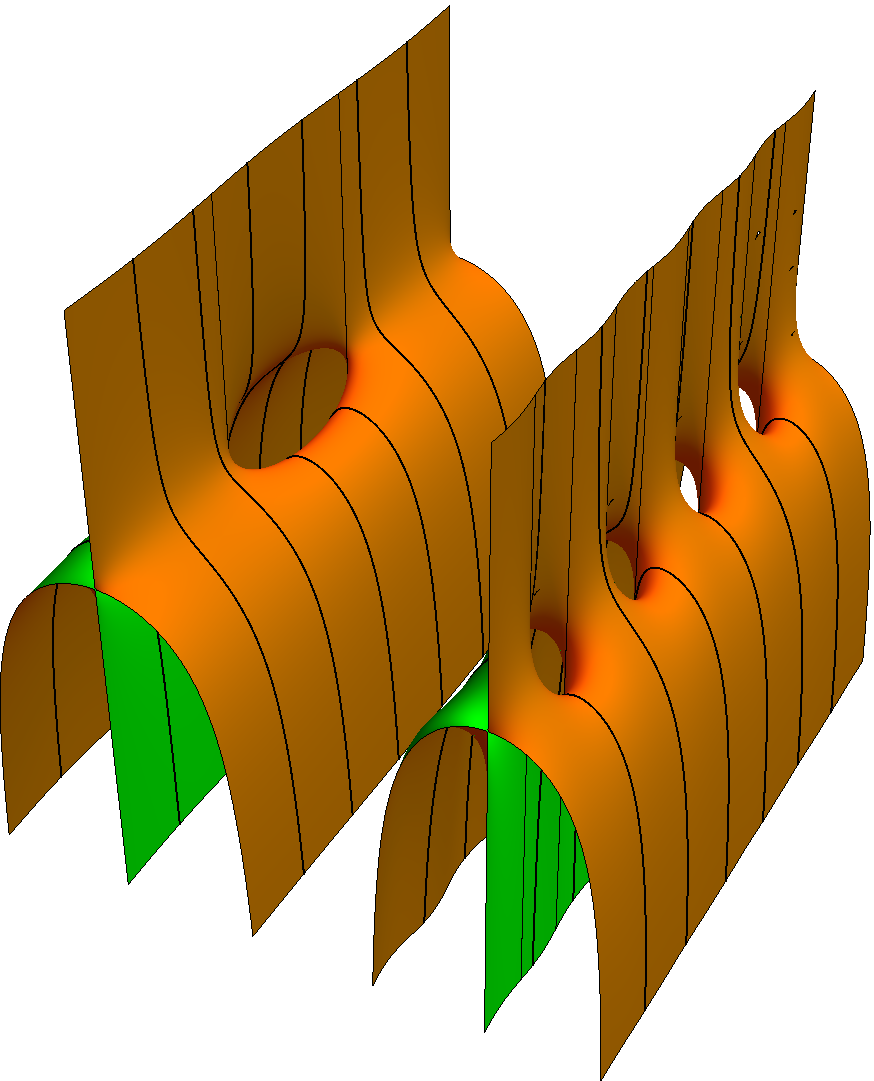}}
\caption{\small \sffamily Two examples of tridents, one with neck-size $a=1$ (right) and the other with neck-size $a=3$ (left).}
\end{figure}

\begin{theorem}[Limits in \cite{HMW2} Theorem 26]\label{thm:LimitsTridents}
The limit of tridents $M_a$ as $a \to \infty$ exists and is the same as the limit of Scherkenoids of width $\pi$, the unique pitchfork $\pP_{\pi}$ of width $\pi$.
\end{theorem}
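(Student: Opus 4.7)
The plan is to combine the compactness established by Hoffman-Mart\'{\i}n-White in \cite[Theorem 26]{HMW2} with the newly proved uniqueness of the pitchfork (Theorem \ref{thm:UniquenessPitchforks}) to upgrade subsequential convergence to full convergence, and to identify the trident limit with the (already understood) Scherkenoid limit.

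First I would extract subsequential limits. By the compactness part of \cite[Theorem 26]{HMW2}, any sequence $a_n \to \infty$ admits a subsequence along which $M_{a_n}$ converges smoothly on compact subsets of $\bR^3$ to a properly embedded semigraphical translator $M_\infty$. The fundamental piece of $M_{a_n}$ is $\sfgr(u_{a_n})$ over a strip $\bR \times (0, b(a_n))$, with the boundary-value sign change at the origin and the normalization of tangency to the $(y,z)$-plane at $0$. As $a_n \to \infty$, the additional vertical lines $\{ k a_n \} \times \{0\} \times \bR$ for $k \neq 0$ escape every compact set, while the line at $k = 0$ and the tangency at the origin persist in the smooth limit. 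Combined with the characterization of limits in \cite{HMW2}, this forces $M_\infty$ to be a semigraphical translator with a single boundary line, whose fundamental piece has the sign-change boundary data \eqref{eqn:pitchforkboundaryvalues} of Definition \ref{def:Pitchfork}, and with width exactly $\pi$.

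Next I would apply uniqueness. By Theorem \ref{thm:UniquenessPitchforks}, the pitchfork of width $\pi$ is unique up to vertical translation, and the tangency normalization at the origin inherited from condition (iv) of the trident definition fixes this translation. Therefore every subsequential smooth limit of $\{ M_{a_n} \}$ coincides with $\pP_\pi$. A standard diagonal argument then promotes subsequential to full convergence: if $M_a$ did not converge smoothly to $\pP_\pi$ as $a \to \infty$, one could extract a subsequence converging to some $M_\infty' \neq \pP_\pi$, contradicting what was just shown. The same uniqueness argument, applied to the subsequential Scherkenoid limits established in \cite[Theorem 10.1]{HMW1}, yields that those limits also equal $\pP_\pi$, so both limits coincide as claimed.

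The hard part is the identification in Step 1 that the subsequential limit has width exactly $\pi$, i.e., that $b(a_n) \to \pi$. Intuitively, as $a$ grows, the two wings of the fundamental piece of $M_a$ over $(-a,0)$ and $(0,a)$ decouple and the surface loses its horizontal periodicity, so the limit should look asymptotically like a pitchfork; the borderline value $w = \pi$ arises as the threshold between the pitchfork and helicoid regimes identified in \cite{HMW1}. A rigorous justification of this identification is carried out in the compactness part of \cite[Theorem 26]{HMW2}, and is precisely the input that, together with Theorem \ref{thm:UniquenessPitchforks}, completes the present theorem.
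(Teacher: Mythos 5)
Your proposal is correct and follows essentially the same route the paper takes: invoke the compactness and width-identification from \cite[Theorem 26]{HMW2} to get subsequential limits that are pitchforks of width $\pi$, and then use Theorem \ref{thm:UniquenessPitchforks} to collapse all subsequential limits to the single surface $\pP_{\pi}$ (which is also the Scherkenoid limit), thereby upgrading to full convergence. The paper gives no further argument beyond this observation, so you have reproduced its content; the only cosmetic quibble is that what you call a ``diagonal argument'' is really the standard subsequence-of-a-subsequence criterion (every subsequence has a further subsequence converging to the same fixed limit), but the reasoning you intend is sound.
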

\noindent
In particular, our Theorem \ref{thm:UniquenessPitchforks} confirms \cite[Conjecture 28]{HMW2}, showing that Theorem \ref{thm:LimitsTridents} holds without needing to pass to subsequences.

Another application of our Theorem \ref{thm:UniquenessPitchforks} concerns the classification of collapsed translators,\footnote{A translator is \textit{collapsed} if it is contained in a vertical slab (strip) of finite width.} performed by Gama-Mart\'in-M\o ller \cite{GMM} for entropy less than 4. 
Their uniqueness results can be stated now in a sharper way thanks to the uniqueness of the pitchforks. 
\begin{theorem}[Classification of symmetric collapsed translators of entropy 3 \cite{GMM}]
The pitchfork $\pP_w$ is the unique translator contained in a slab of width $2 w$, with entropy $3$, and with a vertical line of symmetry. 
\end{theorem}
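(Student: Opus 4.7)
The plan is to bootstrap the classification of symmetric collapsed translators of low entropy in \cite{GMM} with our uniqueness Theorem \ref{thm:UniquenessPitchforks}. First I would verify that $\pP_w$ itself satisfies the three listed conditions: by Definition \ref{def:Pitchfork}, $\pP_w$ is contained in the slab $\{-w<y<w\}$ of width $2w$, and the $z$-axis is a vertical line of symmetry (used in the Schwarz-reflection step of the construction). The entropy computation $\lambda(\pP_w)=3$ is the one carried out in \cite{GMM}, obtained by taking parabolic rescalings and identifying the asymptotic self-shrinker as a union of three planes (two vertical asymptotic ``wings'' together with the horizontal plane appearing as $z\to-\infty$).

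For the uniqueness part, let $\Sigma$ be any translator contained in a slab of width $2w$, of entropy $3$, and with a vertical line of symmetry. After a horizontal rigid motion we may assume that the symmetry axis is the $z$-axis and that $\Sigma\subset\{|y|<w\}$. The classification theorem of Gama-Mart\'in-M\o ller \cite{GMM} for symmetric collapsed translators of entropy at most $3$ then asserts that, up to a vertical translation, $\Sigma$ is a pitchfork of some width $w'$: that is, $\Sigma$ is obtained by Schwarz reflection of the graph of a solution of \eqref{eqn:translatorequationR3} on $\Omega_{w'}$ with the boundary data \eqref{eqn:pitchforkboundaryvalues}. Matching the slab dimension with the geometry of a pitchfork, namely that a pitchfork of width $w'$ has two asymptotic vertical half-plane wings along $\{y=\pm w'\}$ (cf.\ \cite[\S 12]{HMW1}), one is forced to conclude $w'=w$. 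A direct application of Theorem \ref{thm:UniquenessPitchforks} then shows that the pitchfork of width $w$ is unique up to vertical translation, hence $\Sigma=\pP_w$ (up to a vertical shift).

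The main obstacle here is not in the conclusion drawn from Theorem \ref{thm:UniquenessPitchforks}, which is essentially immediate once the identification with a pitchfork has been made, but rather in justifying that the pitchfork width produced by \cite{GMM} matches the ambient slab width $2w$. This is where one must use the asymptotic structure of pitchforks from \cite{HMW1}: since the wings of a pitchfork $\pP_{w'}$ exit every smaller slab $\{|y|<w''\}$ for $w''<w'$, containment $\pP_{w'}\subset\{|y|<w\}$ combined with the hypothesis that the slab $\{|y|<w\}$ is the smallest one containing $\Sigma$ forces $w'=w$. Everything else reduces to quoting \cite{GMM} for the reduction to pitchfork form, and our Theorem \ref{thm:UniquenessPitchforks} for the final uniqueness, which is precisely the gap left open in \cite{GMM}.
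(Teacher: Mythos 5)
Your proposal is essentially the argument the paper has in mind: the theorem is stated as an application, and its content is precisely to combine the Gama--Mart\'in--M\o ller classification (which reduces any symmetric collapsed translator of entropy $3$ to a pitchfork of some width) with Theorem \ref{thm:UniquenessPitchforks}, then pin down the width via the slab hypothesis. You correctly identify the one step needing care, namely the matching $w'=w$, and your justification via the asymptotic vertical wings (so that $\pP_{w'}$ lies in no slab strictly narrower than $\{|y|<w'\}$) is the right one, granted the natural reading that $2w$ is the minimal slab width. One small imprecision in a side remark: the entropy-$3$ tangent flow at infinity is not two vertical planes plus a horizontal plane; after reflection the pitchfork is asymptotic (as $z\to-\infty$ and along its wings) to the three parallel \emph{vertical} planes $\{y=-w\},\{y=0\},\{y=w\}$, which is what yields density $3$. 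This does not affect the validity of your argument, which otherwise coincides with the paper's.
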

\noindent 
Pitchforks are important in the classification of semigraphical translators (cf. \cite[Theorem 34]{HMW2}) because they have finite entropy; this is equivalent to having finite topology and quadratic area growth, by \cite[Theorem 9.1]{white21}.
They might therefore arise as blowups at singularities of initially smooth, closed surfaces evolving by mean curvature flow.
Helicoids have cubic area growth, hence infinite entropy, so they cannot arise as such blowups; however, our uniqueness result allows us to obtain a minimal surface as a rescaled limit.
\begin{theorem}
The sequence of (unique) helicoidal translators $ \{ \pH_w \}_{w \in (0,\pi)}$, rescaled by their width $w$, converges in the strong sense to the helicoid minimal surface in $\bR^3$ as $w \to 0^+$.
\end{theorem}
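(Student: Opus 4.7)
The plan is a rescaling-and-compactness argument. Set $\Sigma_w := w^{-1} \pH_w \subset \bR^3$, so that the fundamental piece of $\Sigma_w$ is a graph over the fixed strip $\bR \times (0,1)$ and its axis lines lie at $n \cdot (\hat{x}(w)/w, 1, 0) + \bR \mathbf{e}_3$ for $n \in \bZ$. Since mean curvature scales inversely with length, the translator equation $H = -\la \nu, \mathbf{e}_3 \rg$ for $\pH_w$ becomes
\[
H_{\Sigma_w} = -w \, \la \nu_{\Sigma_w}, \mathbf{e}_3 \rg
\]
on $\Sigma_w$, so $|H_{\Sigma_w}| \leq w$ uniformly and vanishes as $w \to 0^+$.

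Next I would establish local uniform curvature bounds for $\{\Sigma_w\}$ on compact subsets of $\bR^3$. Away from the axis lines, the fundamental pieces are graphs satisfying a quasilinear elliptic PDE that is a perturbation of the minimal surface equation by $O(w)$ lower-order terms, so Schauder theory applied to the graphing functions yields local $C^{k,\alpha}$ estimates depending only on the gradient modulus on the relevant compact set. Near an axis line, the Schwarz-reflective structure forces $\Sigma_w$ to cross the line smoothly, and White's local regularity for surfaces with bounded mean curvature together with the asymptotic behavior of $\pH_w$ near its axis lines (from \cite[Theorem 11.1]{HMW1}) prevents curvature concentration. Moreover, the inter-axis spacing $\hat{x}(w)/w$ diverges as $w \to 0^+$, so after recentering the basepoint on a single fixed axis, any bounded region of $\bR^3$ eventually contains exactly one axis line of $\Sigma_w$.

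A diagonal Arzela-Ascoli argument then extracts a subsequential smooth limit $\Sigma_\infty$. Passing to the limit in the prescribed mean curvature equation gives $H_{\Sigma_\infty} \equiv 0$, so $\Sigma_\infty$ is a complete, properly embedded minimal surface in $\bR^3$. It is simply connected (as a smooth limit of simply connected surfaces with uniform local topology), and it contains the distinguished vertical axis inherited from the $\{\Sigma_w\}$. By the Meeks-Rosenberg classification of properly embedded simply connected minimal surfaces in $\bR^3$, $\Sigma_\infty$ is either a plane or the standard helicoid; the presence of the vertical axis line rules out the plane, so $\Sigma_\infty$ is a helicoid, whose pitch is determined by the rescaled limit of the axis spacing. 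Since every subsequence yields the same limit (up to ambient isometry), the full sequence $\{\Sigma_w\}$ converges smoothly on compact sets as $w \to 0^+$.

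The main obstacle I anticipate is controlling the geometry near the axis lines of $\Sigma_w$, where curvature can in principle concentrate and where the surfaces are not graphical. The rigidity provided by the Schwarz-reflective structure of semigraphical translators, together with the quantitative asymptotic behavior of $\pH_w$ derived in \cite{HMW1}, is essential both for preventing this concentration and for pinning down the precise identification of $\Sigma_\infty$ as the standard helicoid rather than some other minimal object.
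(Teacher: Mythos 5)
Your overall strategy --- rescale by $1/w$, extract a subsequential smooth limit, verify it is minimal, identify it as the helicoid, and then upgrade to full convergence --- matches the skeleton of the paper's argument, which cites a compactness theorem for the extraction and an earlier identification of the limit. However, as written, two of your steps do not hold up. First, the Schauder argument for the graphical parts fails in the rescaled picture: as $w\to 0^+$ the Gauss maps of $\pH_w$ (hence of $\Sigma_w$, the Gauss map being scale-invariant) converge to directions in the $(x,y)$-plane, i.e.\ the tangent planes become vertical, so the gradients of the graphing functions over the $(x,y)$-plane blow up on \emph{every} fixed compact set --- not only near the axis lines. Schauder bounds whose constants depend on the gradient modulus therefore give no uniform local curvature estimate; this is exactly why the paper invokes White's intrinsic compactness theorem \cite[Theorem 1.1]{white18} rather than a graphical argument. (Incidentally, the same Gauss-map observation is how the paper sees that the limit is minimal.) Second, your identification of the pitch is internally inconsistent: you correctly note that the rescaled axis spacing $\hat{x}(w)/w$ diverges, and then assert that the pitch of $\Sigma_\infty$ is ``determined by the rescaled limit of the axis spacing.'' A divergent quantity pins down no pitch. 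Without a separate normalization (e.g.\ matching the azimuthal rate of winding along the single surviving axis), Meeks--Rosenberg tells you only that each subsequential limit is \emph{some} helicoid or a plane; your claim that ``every subsequence yields the same limit'' is therefore unsupported. The paper defers the identification to \cite[Theorem 2.2]{HMW1}. Finally, you never say where the uniqueness theorem for $\pH_w$ --- the result this section is an application of --- actually enters: it is what makes the one-parameter family $\{\pH_w\}_{w}$ canonically well-defined, which is precisely what lets one upgrade the previously known subsequential limit to the ``strong'' (full-sequence) convergence asserted in the statement.
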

\noindent 
The existence of a subsequential limit is established by a compactness argument in \cite[Theorem 1.1]{white18}; we can now say that the limit exists and is unique.
It is clear that the limit is a minimal surface, because the Gauss maps $\nu_{\pH_w}$ of the helicoids converge, after the rescaling, to a direction in the $(x,y)-$plane; hence, $\vec{H} = 0$ in the limit.
This means that the limiting manifold is a minimal surface, which is in fact the helicoid.
Note that \cite[Theorem 2.2]{HMW1} obtains this limit without requiring the uniqueness of helicoidal translators.

\begin{bibdiv}
\begin{biblist}

\bib{chini}{article}{ 
author={Chini, F.},
doi = {10.1515/geofl-2020-0101},
url = {https://doi.org/10.1515/geofl-2020-0101},
title = {Simply connected translating solitons contained in slabs},
journal = {Geometric Flows},
number = {1},
volume = {5},
year = {2020},
pages = {102--120},}

\bib{GMM}{article}{
author={Gama, E.S.},
author={Mart\'{\i}n, F.},
author={M\o ller, N.M.},
   title={Finite entropy translating solitons in slabs},
   journal={Preprint arXiv:2209.01640},
   date={2022},
}

\bib{GT}{book}{
   author={Gilbarg, David},
   author={Trudinger, Neil S.},
   title={Elliptic partial differential equations of second order},
   series={Classics in Mathematics},
   note={Reprint of the 1998 edition},
   publisher={Springer-Verlag, Berlin},
   date={2001},
   pages={xiv+517},
   isbn={3-540-41160-7},
   review={\MR{1814364}},
}

\bib{graphs}{article}{
   author={Hoffman, D.},
   author={Ilmanen, T.},
   author={Mart\'{\i}n, F.},
   author={White, B.},
   title={Graphical translators for mean curvature flow},
   journal={Calc. Var. Partial Differential Equations},
   volume={58},
   date={2019},
   number={4},
   pages={Paper No. 117, 29},
   issn={0944-2669},
   review={\MR{3962912}},
   doi={10.1007/s00526-019-1560-x},
   Label={HIMW19a},
}

\bib{graphs-correction}{article}{
   author={Hoffman, D.},
   author={Ilmanen, T.},
   author={Mart\'{\i}n, F.},
   author={White, B.},
   title={Correction to: Graphical translators for mean curvature flow},
   journal={Calc. Var. Partial Differential Equations},
   volume={58},
   date={2019},
   number={4},
   pages={Art. 158, 1},
   issn={0944-2669},
   review={\MR{4029723}},
   review={Zbl 07091751},
   doi={10.1007/s00526-019-1601-5},
   Label={HIMW19b},
}

\bib{himw-survey}{article}{
author={Hoffman, D.},
author={Ilmanen, T.},
author={Mart\'{\i}n, F.},
author={White, B.},
title={Notes on Translating Solitons of the Mean Curvature Flow},
   conference={
      title={T. Hoffmann et al. (eds.), Minimal Surfaces: Integrable Systems and Visualisation},
   },
   book={
      series={Springer Proceedings in Mathematics \& Statistics},
      volume={349},
      publisher={Springer Nature Switzerland AG},
      },
     date={2021},
   pages={147--168},
   review={\MR{2167267}},
   doi={10.1007/978-3-030-68541-6\;9},
}

\bib{HMW1}{article}{
   author={Hoffman, D.},
   author={Mart\'{\i}n, F.},
   author={White, B.},
   title={Scherk-like translators for mean curvature flow},
   journal={J. Differential Geom.},
   volume={122},
   date={2022},
   number={3},
   pages={421--465},
   issn={0022-040X},
   review={\MR{4544559}},
   doi={10.4310/jdg/1675712995},
   Label={HMW22a},
}

\bib{HMW2}{article}{
   author={Hoffman, D.},
   author={Mart\'{\i}n, F.},
   author={White, B.},
   title={Nguyen's tridents and the classification of semigraphical
   translators for mean curvature flow},
   journal={J. Reine Angew. Math.},
   volume={786},
   date={2022},
   pages={79--105},
   issn={0075-4102},
   review={\MR{4434751}},
   doi={10.1515/crelle-2022-0005},
   Label={HMW22b},
}

\bib{morserado}{article}{
author={Hoffman, D.},
author={Mart\'{\i}n, F.},
author={White, B.},
title={Morse-Rad\'{o} theory for minimal surfaces},
volume={108},
number={4},
pages={1669-1700},
date={2023},
journal={J. Lond. Math. Soc.},
%status={to appear},
%note={\href{https://doi.org/10.1112/jlms.12791}{Jlms early view}},
doi={10.1112/jlms.12791},
Label={HMW23a},
}

\bib{HMW3}{article}{
   author={Hoffman, D.},
   author={Mart\'{\i}n, F.},
   author={White, B.},
   title={Translating Annuli for Mean Curvature Flow},
   date={2023},
  journal={Preprint arXiv:2308.02210},
  doi={https://doi.org/10.48550/arXiv.2308.02210},
  Label={HMW23b},
}

\bib{ilmanen_1994}{article}{
   author={Ilmanen, T.},
   title={Elliptic regularization and partial regularity for motion by mean
   curvature},
   journal={Mem. Amer. Math. Soc.},
   volume={108},
   date={1994},
   number={520},
   pages={x+90},
   review={\MR{1196160 (95d:49060)}},
   review={Zbl 0798.35066},
  % doi={10.1090/memo/0520},
}

\bib{magnanini}{article}{
   author={Magnanini, R.},
   title={An introduction to the study of critical points of solutions of
   elliptic and parabolic equations},
   journal={Rend. Istit. Mat. Univ. Trieste},
   volume={48},
   date={2016},
   pages={121--166},
   issn={0049-4704},
   review={\MR{3592440}},
   doi={10.13137/2464-8728/13154},
}

\bib{MSHS15}{article}{
   author={Mart\'{\i}n, F.},
   author={Savas-Halilaj, A.},
   author={Smoczyk, K.},
   title={On the topology of translating solitons of the mean curvature
   flow},
   journal={Calc. Var. Partial Differential Equations},
   volume={54},
   date={2015},
   number={3},
   pages={2853--2882},
   issn={0944-2669},
   review={\MR{3412395}},
   doi={10.1007/s00526-015-0886-2},
}

\bib{nguyen}{article}{
   author={Nguyen, X. N.},
   title={Translating tridents},
   journal={Comm. Partial Differential Equations},
   volume={34},
   date={2009},
   number={1-3},
   pages={257--280},
   issn={0944-2669},
   review={\MR{2512861}},
   doi={10.1080/03605300902768685},
}

\bib{spruck-xiao}{article}{
   author={Spruck, J.},
   author={Xiao, L.},
   title={Complete translating solitons to the mean curvature flow in $\bR^3$ with nonnegative mean curvature},
   journal={Amer. J. Math.},
   volume={142},
   date={2020},
   number={3},
   pages={993--1015},
   issn={0002-9327},
   review={\MR{4101337}},
   doi={10.1353/ajm.2020.0023},
}

\bib{white18}{article}{
   author={White, B.},
   title={On the compactness theorem for embedded minimal surfaces in
   3-manifolds with locally bounded area and genus},
   journal={Comm. Anal. Geom.},
   volume={26},
   date={2018},
   number={3},
   pages={659--678},
   issn={1019-8385},
   review={\MR{3844118}},
   doi={10.4310/CAG.2018.v26.n3.a7},
}

\bib{white21}{article}{
   author={White, B.},
   title={Mean curvature flow with boundary},
   journal={Ars Inven. Anal.},
   date={2021},
   pages={Paper No. 4, 43},
   review={\MR{4462472}},
}

\end{biblist}

\end{bibdiv}

\end{document}